\numberwithin{equation}{section}
\theoremstyle{definition}
\newtheorem{thm}{Theorem}[section]
\theoremstyle{definition}
\newtheorem{lm}[thm]{Lemma}
\theoremstyle{definition}
\newtheorem{cor}[thm]{Corollary}
\theoremstyle{definition}
\newtheorem{pr}[thm]{Proposition}
\theoremstyle{definition}
\newtheorem{df}[thm]{Definition}
\theoremstyle{remark}
\newtheorem{rem}[thm]{Remark}
\newcommand{\Rn}{\mathbb{R}^{n}}
\newcommand{\hn}{\mathbb{R}^{n-1}}
\newcommand{\R}{\mathbb{R}}
\newcommand{\T}{\mathbb{T}}
\newcommand{\N}{\mathbb{N}}
\renewcommand{\H}{\mathbb{H}}
\renewcommand{\hn}{\mathbb{H}^n}
\newcommand{\V}{\mathcal{V}}
\newcommand{\W}{\mathcal{W}}
\renewcommand{\H}{\mathcal{H}}
\newcommand {\grtrsim} {\ {\raise-.5ex\hbox{$\buildrel>\over\sim$}}\ }
\newcommand{\e}{\varepsilon}
\renewcommand{\r}{\rho}
\newcommand{\ra}{\rightarrow}
\newcommand{\khii}{\text{\lower -.4ex\hbox{$\chi$}}}
\DeclareMathOperator{\spt}{spt} \DeclareMathOperator{\dist}{dist}
 \DeclareMathOperator{\diam}{diam}
\DeclareMathOperator{\tanm}{Tan} \DeclareMathOperator{\tr}{tr}
\DeclareMathOperator{\ittanm}{itTan}
\renewcommand{\t}{\tau}
\newcommand{\f}{\varphi}
\renewcommand{\a}{\alpha}
\newcommand{\rsei}{\mathcal{R}^{\varepsilon}_{s,i}}
\newcommand{\rsim}{\mathcal{R}^*_{s,i}}
\newcommand{\rsm}{\mathcal{R}^{*}_s}
\begin{document}
\title [Singular integrals on AD-regular subsets of the Heisenberg group] {Singular integrals on Ahlfors-David regular subsets of
the Heisenberg group}
\author{Vasilis Chousionis and Pertti Mattila}

\thanks{The authors are grateful to the Centre de Recerca Matem\'atica, Barcelona, for providing an excellent working enviroment during the thematic semester ``Harmonic Analysis, Geometric Measure Theory and Quasiconformal Mappings'', where parts of this paper were written. Both authors were supported by the Academy of Finland.} \subjclass[2000]{Primary 42B20,28A75}

\begin{abstract}We investigate certain singular integral operators with Riesz-type kernels on
s-dimensional Ahlfors-David regular subsets of Heisenberg groups. We show
that $L^2$-boundedness, and even a little less, implies that $s$ must be an integer and the set
can be approximated at some arbitrary small scales by homogeneous 
subgroups. It follows that the operators cannot be bounded on many
self similar fractal subsets of Heisenberg groups.
\end{abstract}

\maketitle

\section{Introduction}
In this paper we study certain singular integral operators on lower dimensional, in terms of Hausdorff dimension, 
subsets of the Heisenberg group $\hn$. We shall first review analogous results which are known to be true in $\R^n$. 
We shall study Ahlfors-David regular and somewhat more general sets and measures:

\begin{df}\label{AD}
A Borel measure $\mu$ on a metric space $X$ is Ahlfors-David regular, or 
AD-regular, if for some positive numbers $s$ and $C$,
$$r^s/C \leq \mu(B(x,r)) \leq Cr^s\ \text{for all}\ x\in \spt\mu, 0<r<\diam(\spt\mu),$$
where $\spt\mu$ stands for the support of $\mu$.
\end{df}

In $\R^n$ the most well known relevant singular integral operators for such $s$-dimensional AD-regular measures are those defined 
by the vector-valued Riesz kernel $|x|^{-s-1}x, x\in\R^n$. The basic question is the validity of the 
$L^2$-boundedness:

\begin{equation}
\label{L2}
\int \left|\int_{X\setminus B(x,r)}\frac{x-y}{|x-y|^{s+1}}g(y)d\mu y\right|^2d\mu x \leq C\int|g|^2d\mu 
\end{equation}
for all $g\in L^2(\mu)$ and all $r>0$. 

Vihtil\"a showed in \cite{V} that if this $L^2$-boundedness holds for some non-trivial $s$-dimensional AD-regular 
measure in $\R^n$, then $s$ must be an integer. Moreover, it was shown in \cite{MPa} and \cite{M4} that in this case 
$\mu$ can be approximated almost everywhere at some arbitrarily small scales by Hausdorff $s$-dimensional measures 
on $s$-planes. In our main theorem, Theorem \ref{mthm}, we prove natural analogues of these results in $\hn$.

It is an open question in $\R^n$, and will remain as such also in $\hn$, whether above 'some arbitrarily small scales' could be replaced by 'all sufficiently small scales'. This would mean that $\mu$ would be a rectifiable measure. Even more could be expected: for AD-regular sets in $\R^n$ the $L^2$-boundedness could be equivalent to the uniform rectifiability in the sense of David and Semmes, the converse is known to hold, see \cite{DS}. This equivalence is valid for 1-dimensional AD-regular sets in $\R^n$ by \cite{MMV}. The proof is based on a relation between the Riesz kernel, $|x|^{-2}x, x \in \Rn$, and the so-called Menger curvature, see e.g. \cite{Pa} and \cite{Ve}. It is not clear to us if a similar strategy can be followed in our setting as we don't know if an analogous relation holds true in the Heisenberg group.

In the last section of the paper we discuss some self-similar sets to which our results apply. First we modify 
ideas of Strichartz from \cite{St} to construct standard Cantor sets on which the Riesz transforms cannot be 
$L^2$-bounded. These are kind of analogues of the Garnett-Ivanov Cantor sets (see \cite{To}) which many authors have 
used as examples to study and illustrate Cauchy and Riesz transforms and analytic and harmonic capacities. In 
fact, we shall prove the non-boundedness of our Riesz transforms on more general self-similar sets.

\section{Notation and Setting}

For an introduction to Heisenberg groups, see for example \cite{cap} or \cite{blu}. Below we state the basic facts needed in this paper.

The Heisenberg group $\hn$, identified with $\R^{2n+1}$, is a non-abelian group where the group
operation is given by,
$$p\cdot q=(p_1+q_1,..,p_{2n}+q_{2n},p_{2n+1}+q_{2n+1}+A(p,q)),$$
where
$$A(p,q)=-2\sum_{i=1}^n(p_iq_{i+n}-p_{i+n}q_i).$$
We will also denote points $p \in \hn$ by $p=(p',p_{2n+1}), p'\in\R^{2n}, p_{2n+1}\in\R$.
For any $q \in \hn$ and $r >0$, let $\tau_q:\hn \ra \hn$ be the left translation
$$\tau_q(p)=q\cdot p,$$
and define the dilation $\delta_r:\hn \ra \hn$ by
$$\delta_{r}(p)=(rp_1,..,rp_{2n},r^2p_{2n+1}).$$
These dilations are group homomorphisms.

A natural metric $d$ on $\hn$ is defined by
$$d(p,q)=\|p^{-1}\cdot q\|$$
where
$$\|p\|=(\|(p_1,..,p_{2n})\|^4_{\R^{2n}}+p^2_{2n+1})^{\frac{1}{4}}.$$
The metric is left invariant, that is $d(q\cdot p_1,q\cdot p_2)=d(p_1,p_2)$, and the dilations satisfy 
$d(\delta_r(p_1),\delta_r(p_2))=rd(p_1,p_2)$. The closed and open balls with respect to $d$ will 
be denoted by $B(p,r)$ and $U(p,r)$. Moreover, we use the notation $B(r)$ and $U(r)$ when the centre $p$ 
is the origin $0$, which is the neutral element of the group.
The  Euclidean metric on $\hn$ will be denoted by $d_E$.

A subgroup $G$ of $\hn$ is called homogeneous if it is closed and invariant under 
the dilations; $\delta_r(G)=G$ for all $r>0$. Every homogeneous subgroup $G$ is a linear subspace of $\R^{2n+1}$. 
We call $G$ a $d$-subgroup if its linear dimension $\dim G$ is $d$. 

We denote
$$\T=\{p \in \hn :p'=0\}\ \text{and}\ H=\{p \in \hn:p_{2n+1}=0\}.$$
Then $T$ is a homogeneous subgroup but $H$ is not a subgroup. We shall often identify $H$ with $\R^{2n}$.

If $V$ is a $d$-subgroup of $\hn$, define the cone
$X(p,V,\delta)$ for $p \in \hn$ and $\delta \in (0,1)$ as
\begin{equation}
\label{cone} X(p,V,\delta):=\{q \in \hn : \dist(p^{-1}\cdot q,
V)<\delta d(q,p)\}.
\end{equation}
It follows that $X(p,V,\delta)=p\cdot X(0,V,\delta)$.

We shall denote by $G(m,k)$ the Grassmannian of $k$-dimensional subspaces of $\R^m$ and  
$G(m):=\cup_{k=0}^m G(m,k)$. Then $G(m)$ is a compact metric space, for example with the metric $\rho, 
\rho(V,W)=||P_V-P_W||$, where $P_V$ is the orthogonal projection onto $V$ and $||\cdot||$ is the operator norm.
 
\begin{df}
\label{vgr}For $L \in G(2n)$, denote
$$V_L=\{p \in \hn:p' \in L\}=L\times\T.$$
Every such $V_L$ is a homogeneous subgroup and it will be called vertical. 
\end{df}
\begin{df}
\label{vertra} For $d \in [1,2n]$ let
$$\V_{d}=\{V_L:L \in G(2n,d-1)\}.$$
Each vertical subgroup in $\V_{d}$ is a $d$-subgroup with metric (Hausdorff) 
dimension $d+1$. Moreover, let
$$\tr\V_{d}=\{a \cdot V_L:a \in \hn ,L \in G(2n,d-1)\}.$$
\end{df}

The homogeneous subgroups of $\hn$ which are not vertical are called horizontal. They are linear subpaces 
of $H$, that is, they belong to $G(2n)$. A subspace $L\in G(2n)$ is a (homogeneous) subgroup if and only if $A(p,q)=0$ for all $p,q\in L$. 
\begin{df}
\label{hortra}
For $d \in [0,2n]$ let
$$\mathcal{W}_d=\{G \subset \hn:G\text{ is a horizontal subgroup of } \hn \text{ such that }G \in G(2n,d)\},$$
and
$$\tr \mathcal{W}_d=\{a\cdot G:a \in \hn, G \in \mathcal{W}_d\}.$$
\end{df}

We denote by $Gr(n,m)$ the set of homogeneous subgroups of $\hn$ of Hausdorff dimension $m$. It is a closed 
subset of $G(2n+1)$. The Haar measures of $V\in Gr(n,m)$ are the positive constant multiples of 
$\mathcal{H}^m \lfloor V$, the restriction of the $m$-dimensional Hausdorff measure $\H^m$ to $V$. We denote the set of all such Haar measures by $\mathcal H(n,m)$.

\begin{lm}
\label{subgroup}
Let $L\in G(2n)$ such that $L$ is not a subgroup of $\hn$. Then there exists $p\in L$ such that
every sequence $(r_i)$ of positive numbers tending to $0$ has a subsequence $(r_{i_j})$ such that for 
some vertical subgroup $M$ with $\dim M=\dim L$,
$$\delta_{1/r_{i_j}}(p^{-1}L)\to M\ \text{as}\ j\to\infty.$$
\end{lm}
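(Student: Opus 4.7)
The plan is to translate the non-subgroup hypothesis into the language of the standard symplectic form on $\R^{2n}$, choose a carefully selected base point $p\in L$, and then show that the anisotropic dilations of $p^{-1}\cdot L$ tilt the resulting graph into a vertical subgroup. Define $\omega(u,v) := -2\sum_{i=1}^n(u_iv_{i+n}-u_{i+n}v_i)$ on $\R^{2n}$, so that $A(p,q) = \omega(p',q')$. The statement ``$L$ is not a subgroup of $\hn$'' becomes exactly ``$\omega|_{L\times L}\not\equiv 0$''. Hence there is a vector $p'\in L$ such that the linear functional $\omega(p',\cdot)$ does not vanish identically on $L$. I set $p := (p',0)\in L$ and define $K := L\cap\ker\omega(p',\cdot)$, a subspace of $L$ of codimension $1$ and hence dimension $d-1$, where $d := \dim L$. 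The candidate limit is $M := V_K = K\times\T$, which by Definition \ref{vgr} is a vertical subgroup of linear dimension $d = \dim L$.

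Next, I would carry out the rescaling computation explicitly. Since $p^{-1}=-p$ in Heisenberg coordinates and $\omega$ is antisymmetric (so $\omega(p',p')=0$), a direct calculation with the group law yields
$$p^{-1}\cdot L = \{(w, -\omega(p',w)) : w \in L\},$$
the graph over $L$ of a nonzero linear functional. Applying $\delta_{1/r}$ gives
$$\delta_{1/r}(p^{-1}\cdot L) = \{(u, -\omega(p',u)/r) : u\in L\},$$
which is a $d$-dimensional linear subspace of $\R^{2n+1}$ whose vertical tilt blows up as $r\to 0$.

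To pin down the limit in the Grassmannian metric $\rho(V,W)=\|P_V-P_W\|$, I pick an orthonormal basis $e_1,\dots,e_{d-1}$ of $K$ and complete it to an orthonormal basis of $L$ by a unit vector $e_d\perp K$, so that $c := \omega(p',e_d)\neq 0$. Then $(e_1,0),\dots,(e_{d-1},0)$ together with the normalized vector $(re_d,-c)/\sqrt{r^2+c^2}$ form an orthonormal basis of $\delta_{1/r}(p^{-1}\cdot L)$, and the last vector converges to $-\mathrm{sgn}(c)\,e_{2n+1}$ as $r\to 0$. Consequently the orthogonal projections converge in operator norm to the projection onto $\mathrm{span}(e_1,\dots,e_{d-1},e_{2n+1}) = V_K$, giving $\delta_{1/r}(p^{-1}L)\to M$ in $G(2n+1,d)$. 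This convergence holds along the full sequence $r_i\to 0$ with limit independent of the sequence, so the subsequence extraction in the statement is automatic.

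The only real content is the selection of $p$: had $p'$ been chosen in the symplectic annihilator of $L$, the sets $p^{-1}L$ would remain contained in $H$ and the Grassmannian limit would be horizontal rather than vertical, so the hypothesis that $L$ is not a subgroup is exactly what forces a valid choice of $p$ to exist. Everything else is routine bookkeeping of the anisotropic rescaling, and I do not anticipate any structural obstacle.
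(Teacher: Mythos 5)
Your proof is correct, and it reaches a somewhat stronger conclusion than the lemma asks for, by a more explicit route than the paper's. Both arguments start from the same observation---$L$ fails to be a subgroup exactly when $A$ (your $\omega$) is not identically zero on $L\times L$---but they diverge after that. The paper takes any $p,q\in L$ with $A(p,q)\neq 0$, considers the single curve $p_i=p+r_i^2q\in L$, computes $\delta_{1/r_i}(p^{-1}\cdot p_i)=(r_iq',-A(p,q))\to(0,-A(p,q))\in\T\setminus\{0\}$, and then invokes compactness of the Grassmannian to extract a subsequential limit $M$; since each rescaled set lies in $V_L$ and the limit contains a nonzero point of $\T$, it contains $\T$ and is therefore vertical of dimension $\dim L$. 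The subsequence in the statement is thus genuinely used, and the limit $M$ is not identified. You instead compute the entire rescaled family as the graph $\{(u,-\omega(p',u)/r):u\in L\}$ over $L$, exhibit an explicit orthonormal basis, and identify the limit as $V_K$ with $K=L\cap\ker\omega(p',\cdot)$, with convergence along the full sequence and no compactness argument. What the paper's approach buys is brevity (one test curve suffices, no basis bookkeeping); what yours buys is an exact description of the limiting vertical subgroup and the fact that no subsequence extraction is needed for your particular choice of $p$. One tiny caveat: your $K$ depends on the chosen $p'$, so the limit $M$ you produce is specific to that choice, whereas the paper's $M$ is whatever the subsequence yields---but the lemma only asserts existence of some $p$ and some $M$, so both readings satisfy it.
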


\begin{proof}
Since $L$ is not a subgroup there exist 
$p,q\in L$ such that  $p^{-1}q\notin L$. That $p,q \in L,p^{-1}q\notin L$ means that $A(p,q)\not=0.$
Let $p_i=p+r_i^2q\in L$. Then 
$$\delta_{1/r_i}(p^{-1}\cdot p_i)=(r_iq',-A(p,q))\to(0,-A(p,q))\in\T\setminus\{0\}.$$
We can take a subseqence $(r_{i_j})$ of $(r_i)$ such that the linear subspaces $\delta_{1/r_{i_j}}(p^{-1}L)$ converge
to some linear subspace $M$ of $\R^{2n+1}$. Then $\T\subset M\subset V_L$ and 
$\dim M=\dim \delta_{1/r_{i_j}}(p^{-1}L)=\dim L$ for all $j$. 
\end{proof}

\begin{df}
\label{rk}
The $s$-Riesz kernels in $\hn$, $s\in(0,2n+2]$, are defined as
$$R_s(p)=\left(R_{s,1}(p),\dots,R_{s,2n+1}(p)\right)$$
where
$$R_{s,i}(p)=\frac{p_i}{\|p\|^{s+1}} \text{ for }i=1,\dots,2n$$
and
$$R_{s,2n+1}(p)=\frac{p_{2n+1}}{\|p\|^{s+2}}.$$

\end{df}
Notice that these kernels are antisymmetric,
$$R_s(p^{-1})=(R_s(p))^{-1},$$
and $s$-homogeneous,
$$R_s(\delta_r(p))=\delta_{\frac{1}{r^s}}(R_s(p)).$$

\begin{df}
\label{rtr}
For a Radon measure $\mu$ in $\hn$, define the truncated $s$-Riesz transforms for $f \in L^1(\mu)$ by
$$\mathcal{R}^{\varepsilon}_s(f)(p)=\left( \rsei (f)(p) \right)_{i=1}^{2n+1},$$
where
$$ \rsei (f)(p)= \int_{\hn\setminus B(p,\varepsilon)}R_{s,i}(p^{-1}\cdot q)f(q)d \mu q.$$
The maximal $s$-Riesz transform is given by $$\mathcal{R}^{*}_s(f)(p)=\left(\rsim(f)(p)\right)_{i=1}^{2n+1}$$
where
$$\rsim (f)(p)=\sup_{\varepsilon >0}|\rsei (f) (p)| \text{ for }i=1,\dots,2n+1.$$
The maximal operator $\rsm$ is said to be bounded in $L^2(\mu)$
if the coordinate maximal operators  $\rsim$ are bounded
in $L^2(\mu)$ for all $i=1,\dots,2n+1$.
\end{df}

\begin{rem}
\label{T1homog} As an application of the T1 theorem in spaces of
homogeneous type, see \cite{dh}, it follows that if $m \in \N \cap
[1,2n+2]$ the maximal $m$-Riesz transforms $\mathcal{R}^{*}_m$ are
bounded in $L^2(\mu)$ for all $\mu\in\mathcal H(n,m)$.
\end{rem}

Let $\mu$ be a Radon measure in $\hn$. The image $f_\#\mu$  under a map $f:\hn \to \hn$ is the measure 
on $\hn$ defined by 
$$f_\#\mu(A)=\mu\big(f^{-1}(A)\big)\ \text{for all }\ A \subset \hn.$$
For $a\in \hn$ and $r>0$, $T_{a,r}: \hn \to \hn$ is defined for all $p\in \hn$ by
$$T_{a,r}(p)= \delta_{1/r}(a^{-1}\cdot p).$$

\begin{df}\label{tanm}
Let $\mu$ be a Radon measure on $\hn$. 
We say that $\nu$ is a \emph{tangent measure} of $\mu$ at $a\in \hn$ if $\nu$ is a Radon measure on 
$\hn$ with $\nu(\hn)>0$ and there are positive numbers $c_i$ and 
$r_i$, $i=1,2,\dots$, such that $r_i\to 0$ and
$$c_iT_{a,r_i\#}\mu \to \nu\ \text{weakly as}\ i\to\infty.$$ 
We denote by $\tanm(\mu,a)$ the set of all tangent measures of $\mu$ at a.
\end{df}

The numbers $c_i$ are normalization constants which are needed to keep $\nu$ non-trivial and locally finite. 
Often one can use $c_i=\mu(B(a,r_i))^{-1}$. The following lemma follows as in Remark 14.4 in \cite{M}.

\begin{lm}
\label{tandens}
Let $\mu$ be a Radon measure on $\hn$ and $s>0$ such that for $\mu$ a.e. $p \in \hn$,
\begin{equation*}
 0<\liminf_{r \ra 0} \frac{\mu(B(p,r))}{r^s} \leq
\limsup_{r \ra 0} \frac{\mu(B(p,r))}{r^s}<\infty.
\end{equation*}
Then for $\mu$ a.e. $ a\in\hn$ for every $\nu\in\tanm(\mu,a), 0\in\spt\nu$, and every sequence $(r_k)$ of 
positive numbers tending to $0$ has a subsequence $(r_{k_i})$ such that for some positive number $c$,
$$\nu=c\lim_{i \ra \infty} r_{k_i}^{-s}T_{a,r_{k_i}\sharp} \mu.$$ 
\end{lm}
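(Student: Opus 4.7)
The plan is to follow verbatim the Euclidean template of Remark 14.4 in \cite{M}, using the one exact identity
$$T_{a,r}^{-1}(B(0,R)) = B(a,rR),$$
which follows from left-invariance of $d$ and $d(\delta_r p,\delta_r q) = r\, d(p,q)$. I fix $a$ in the full-measure set where the two-sided density hypothesis holds, and let $\nu\in\tanm(\mu,a)$ be realised by a given sequence $c_k T_{a,r_k\#}\mu\to\nu$ weakly with $r_k\to 0$ and $c_k>0$.

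First I would pin down the normalisation. For any radius $R$ with $\nu(\partial B(0,R))=0$, weak convergence combined with the exact identity above gives
$$c_k\,\mu(B(a,r_k R))\;=\;c_k T_{a,r_k\#}\mu(B(0,R))\;\longrightarrow\;\nu(B(0,R)).$$
Pick such an $R_0$ with $\nu(B(0,R_0))\in(0,\infty)$; one exists because $\nu$ is Radon with $\nu(\hn)>0$, and only countably many radii can fail to be continuity radii. The density hypothesis at $a$ provides constants $0<\theta_*\le\theta^*<\infty$ such that, for large $k$,
$$\theta_*(r_kR_0)^s\;\le\;\mu(B(a,r_kR_0))\;\le\;\theta^*(r_kR_0)^s,$$
and combined with the previous display this forces $c_k r_k^s$ into a compact subinterval of $(0,\infty)$. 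Passing to a subsequence $(r_{k_i})$ of the given sequence we may assume $c_{k_i}r_{k_i}^s \to c$ for some $c\in(0,\infty)$, and then
$$r_{k_i}^{-s}T_{a,r_{k_i}\#}\mu \;=\; (c_{k_i}r_{k_i}^s)^{-1}\, c_{k_i}T_{a,r_{k_i}\#}\mu \;\longrightarrow\; c^{-1}\nu$$
weakly, which is the identity $\nu = c\lim_i r_{k_i}^{-s}T_{a,r_{k_i}\#}\mu$ in the statement.

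To finish I would verify $0\in\spt\nu$. For any $R>0$, pick a continuity radius $R'\in(0,R)$ for $\nu$; the convergence above and the lower density bound give
$$\nu(B(0,R))\;\ge\; \nu(B(0,R'))\;=\;\lim_i c_{k_i}\mu(B(a,r_{k_i}R'))\;\ge\; c\,\theta_*(R')^s\;>\;0,$$
so every neighbourhood of $0$ meets $\spt\nu$. The only care point throughout is using continuity radii for $\nu$ to upgrade weak convergence to convergence on balls, which is routine; the Heisenberg structure enters only through the exact identity $T_{a,r}^{-1}(B(0,R))=B(a,rR)$, so no step presents genuine difficulty and the Euclidean argument transfers essentially verbatim.
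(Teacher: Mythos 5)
Your argument is correct and is exactly the one the paper intends: the paper gives no proof beyond the citation ``follows as in Remark 14.4 in \cite{M}'', and what you have written is precisely that Euclidean argument transported to $\hn$ via the identity $T_{a,r}^{-1}(B(0,R))=B(a,rR)$, with the two-sided density bounds pinning $c_kr_k^s$ into a compact subinterval of $(0,\infty)$. The only interpretive point is that the sequence $(r_k)$ in the statement must be read, as you do, as a sequence realizing $\nu$ as a tangent measure (the claim would be false for an arbitrary sequence, since different scales may yield non-proportional tangent measures); with that reading both your normalization step and the verification that $0\in\spt\nu$ are sound.
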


\begin{df}\label{ittan}
Let $\mu$ be a Radon measure on $\hn$. 
We say that $\nu$ is an \emph{iterated tangent measure} of $\mu$ at $a\in \hn$ if there are Radon measures 
$\nu_1,\dots,\nu_m$ and points $p_i\in\spt\nu_i,i=1,\dots,m-1$, such that $\nu=\nu_m$ and
$$\nu_1\in \tanm(\mu,a),\nu_2\in \tanm(\nu_1,p_1),\dots, \nu_m\in \tanm(\nu_{m-1},p_{m-1}).$$
We denote by $\ittanm(\mu,a)$ the set of all iterated tangent measures of $\mu$ at a.
\end{df}

\begin{lm}
\label{ittanm}
Let $\mu$ be a Radon measure on $\hn$. Then for $\mu$ a.e. 
$a\in\hn$, $\ittanm(\mu,a)\subset\tanm(\mu,a)$.
\end{lm}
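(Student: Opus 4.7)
The plan is to adapt the classical Preiss-Mattila argument (compare Theorem 14.16 of \cite{M}) to the Heisenberg setting, exploiting the left-invariance of $d$ and the homogeneity of the dilations $\delta_r$. By a straightforward induction on the depth $m$ of the iteration, it suffices to treat the case $m=2$: for $\mu$-a.e.\ $a\in\hn$, if $\nu\in\tanm(\mu,a)$, $p\in\spt\nu$, and $\lambda\in\tanm(\nu,p)$, then $\lambda\in\tanm(\mu,a)$. Fix representations $\nu=\lim_{i}c_{i}T_{a,r_{i}\#}\mu$ and $\lambda=\lim_{j}d_{j}T_{p,s_{j}\#}\nu$ with $r_{i},s_{j}\downarrow 0$.

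The first step is the composition rule
\[
T_{p,s}\circ T_{a,r}\;=\;T_{a\cdot\delta_{r}(p),\,sr},
\]
which is immediate from the definition of $T_{\cdot,\cdot}$ together with the fact that dilations are group homomorphisms: $\delta_{1/s}(p^{-1}\cdot\delta_{1/r}(a^{-1}\cdot q))=\delta_{1/s}(p^{-1})\cdot\delta_{1/(sr)}(a^{-1}\cdot q)$. A standard diagonal extraction, based on the metrizability of weak convergence of Radon measures on compacta and Lemma \ref{tandens} (to guarantee the right normalisations and $r_{i(j)}\to 0$), then produces a sequence $i(j)\to\infty$ such that, writing $u_{j}=a\cdot\delta_{r_{i(j)}}(p)$ and $v_{j}=s_{j}r_{i(j)}$,
\[
\lambda\;=\;\lim_{j\to\infty}d_{j}c_{i(j)}\,T_{u_{j},v_{j}\#}\,\mu,\qquad u_{j}\to a,\quad v_{j}\to 0.
\]

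It remains to recognise such a limit as an element of $\tanm(\mu,a)$. Here one invokes the Heisenberg analogues of the standard closure properties enjoyed by $\tanm(\mu,a)$ at a $\mu$-typical point $a$: closure under weak limits (by a further diagonal argument using Lemma \ref{tandens}), closure under the Heisenberg dilations $\sigma\mapsto T_{0,r\#}\sigma$ and positive rescalings, and---the main point---closure under the left-translations $\sigma\mapsto L_{q^{-1}\#}\sigma$ for $q\in\spt\sigma$. Granted these closures, the composition rule and the diagonal limit above express $\lambda$ as a combination of such operations applied to $\nu\in\tanm(\mu,a)$, yielding $\lambda\in\tanm(\mu,a)$.

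The central obstacle is the translation-closure property, i.e.\ showing that whenever $q_{i}\in\spt\mu$ with $q_{i}\to a$ and $c_{i}T_{q_{i},r_{i}\#}\mu\to\sigma$, then for $\mu$-a.e.\ $a$ one has $\sigma\in\tanm(\mu,a)$. The difficulty is not purely algebraic: although $d(u_{j},a)=r_{i(j)}\|p\|\to 0$, the shift relative to the new zoom scale $v_{j}$ equals $\|p\|/s_{j}$ and is unbounded in $j$, so one cannot simply replace $u_{j}$ by $a$. The resolution proceeds exactly as in \cite[Theorem 14.16]{M}: the left-invariance of $d$, the Besicovitch-type differentiation theorem available in $(\hn,d)$, and the density bounds supplied by Lemma \ref{tandens} combine to show that, for $\mu$-a.e.\ $a$, the measures $T_{u_{j},v_{j}\#}\mu$ and $T_{a,v_{j}\#}\mu$ produce the same class of weak subsequential limits (up to the appropriate left translation), so that $\sigma\in\tanm(\mu,a)$. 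With this Heisenberg Preiss-type closure in hand, the proof of the lemma is complete.
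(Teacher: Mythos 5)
Your high-level plan coincides with the paper's: reduce everything to the depth-two statement (for $\mu$-a.e.\ $a$, if $\nu\in\tanm(\mu,a)$, $p\in\spt\nu$ and $\lambda\in\tanm(\nu,p)$, then $\lambda\in\tanm(\mu,a)$) and iterate it at a fixed good point $a$. The paper disposes of that depth-two statement in one line, by citing Preiss's theorem as generalized to metric groups in \cite{ms}; you instead try to sketch its proof, and that sketch is where the gap lies.

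The algebra is fine: the composition rule $T_{p,s}\circ T_{a,r}=T_{a\cdot\delta_r(p),\,sr}$ is correct, and you rightly isolate the central difficulty, namely that $d(u_j,a)/v_j=\|p\|/s_j\to\infty$. But your stated resolution does not work. The left translation relating $T_{u_j,v_j\#}\mu$ to $T_{a,v_j\#}\mu$ is by $\delta_{1/v_j}(a^{-1}\cdot u_j)$, whose norm is exactly $\|p\|/s_j\to\infty$; hence the two families of blow-ups are \emph{not} related by any fixed or convergent translation, and the claim that they ``produce the same class of weak subsequential limits (up to the appropriate left translation)'' is either false (fixed translation) or too weak to place $\lambda$ in $\tanm(\mu,a)$, since $\tanm(\mu,a)$ by Definition \ref{tanm} admits only blow-ups centred at $a$ itself. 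The actual Preiss mechanism is different: one first proves, for $\mu$-a.e.\ $a$, closure of $\tanm(\mu,a)$ under re-centering at points $y\in\spt\nu$ of a tangent measure $\nu$ --- there the displacement \emph{is} comparable to the scale --- and then reaches $\lambda$ by a diagonal argument exploiting that, at a.e.\ point, $\tanm(\mu,a)$ is also closed under dilations, scalings and weak limits along such chains; the measure-theoretic input is a density-point argument run over a countable dense family of measures. Moreover, your appeal to ``the Besicovitch-type differentiation theorem available in $(\hn,d)$'' is itself problematic: the Besicovitch covering property fails for the Kor\'anyi gauge distance used here, so the density-point step of \cite{M}, Theorem 14.16, does not transfer ``exactly''; getting around this in a general metric group is precisely the content of \cite{ms}. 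So either carry out that metric-group argument in full, or do as the paper does and simply quote \cite{ms} for the depth-two case before iterating.
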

\begin{proof}
By a result of
Preiss in $\R^n$ (see \cite{P} or \cite{M}) generalized to metric groups in \cite{ms} the following is true: for 
$\mu$ a.e. $a\in\hn, \sigma\in\tanm(\mu,a)$ whenever $\nu\in\tanm(\mu,a), p\in\spt\nu$ and $\sigma\in\tanm(\nu,p)$. 
Let $a$ be such a point and 
$$\nu_1\in \tanm(\mu,a),\nu_2\in \tanm(\nu_1,p_1),\dots, \nu_m\in \tanm(\nu_{m-1},p_{m-1})$$
with $p_i\in\spt\nu_i$ for $i=1,\dots,m-1$. Choosing above $\nu=\nu_1$ and $\sigma=\nu_2$ we have $\nu_2\in\tanm(\mu,a)$. 
Further, choosing $\nu=\nu_2$ and $\sigma=\nu_3$ we have $\nu_3\in\tanm(\mu,a)$. Continuing this, we get 
$\nu_m\in\tanm(\mu,a)$.
\end{proof} 

\begin{rem}
We would like to thank Enrico Le Donne for the observation that the above lemma is valid. It was also proved and used in
a different setting in \cite{akl}.
\end{rem}

\section{Tangent measures and $s$-Riesz transforms}
\begin{thm}
\label{mthm} Let $s\in (0,2n+2)$ and let $\mu$ be a Radon measure in
$\hn$ satisfying for $\mu$ a.e. $p \in \hn$,
\begin{equation}
\label{dens} 0<\liminf_{r \ra 0} \frac{\mu(B(p,r))}{r^s} \leq
\limsup_{r \ra 0} \frac{\mu(B(p,r))}{r^s}<\infty .
\end{equation}
and 
\begin{equation}
\label{maxbound}
\sup_{0<\varepsilon <1}\left \| \left(\int_{B(p,1)\setminus B(p,\varepsilon)}
R_{s,i}(p^{-1}\cdot q)d\mu q \right)_{i=1}^{2n+1}\right\| < \infty.
\end{equation}
Then
\begin{enumerate}
\item $s $ is an integer in $[1,2n+1]$,
\item for $\mu$-a.e. $a \in \hn$, the set of tangent measures of $\mu$ at $a$, $\operatorname{Tan}(\mu,a)$,
contains measures in $\mathcal H(n,s)$.
\end{enumerate}
\end{thm}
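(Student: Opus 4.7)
The plan is to mimic the blow-up strategy of Mattila--Preiss \cite{MPa} and Mattila \cite{M4} in $\R^n$, adapting it to the Carnot structure of $\hn$. First, by (\ref{dens}) and Lemma \ref{tandens}, at $\mu$-a.e. $a \in \hn$ every $\nu \in \tanm(\mu, a)$ arises as a weak limit $\nu = c \lim_j r_{k_j}^{-s} T_{a, r_{k_j}\#}\mu$ and is itself $s$-Ahlfors--David regular at $\nu$-a.e. point of its support, with constants independent of the scale. Using the $s$-homogeneity of the kernels $R_{s,i}$ under $\delta_r$, the left invariance of $d$, and the change of variables $q = p_i \cdot \delta_{r_{k_j}}(q')$ in the truncated integrals, the bound (\ref{maxbound}) transfers to every such tangent measure: for $\mu$-a.e. $a$ and every $\nu \in \tanm(\mu, a)$,
$$\sup_{0 < \e < R < \infty}\left\|\int_{B(p, R) \setminus B(p, \e)} R_s(p^{-1}\cdot q)\, d\nu q\right\| < \infty$$
for $\nu$-a.e.\ $p \in \spt \nu$. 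The passage to the weak limit is justified because $R_s$ is bounded and continuous on each closed annulus $\overline{B(p, R)} \setminus U(p, \e)$.

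The next and main step is to iterate, using Lemma \ref{ittanm} to ensure that iterated tangent measures remain tangent measures of $\mu$ at $a$ and therefore inherit both AD-regularity and the annular integral bound. Starting from $\nu_1 \in \tanm(\mu, a)$ and a good point $p_1 \in \spt \nu_1$, take $\nu_2 \in \tanm(\nu_1, p_1)$, and continue. The cancellation built into the antisymmetry $R_s(p^{-1}) = -R_s(p)$, combined with the uniform boundedness of the truncated integrals, forces each successive blow-up to gain at least one new direction of translation invariance: any residual anisotropy at unit scale would, upon rescaling, produce a contribution to the annular integrals that grows in magnitude, contradicting the uniform bound. Since $\hn$ has linear dimension $2n+1$, the iteration terminates after finitely many steps in an iterated tangent $\nu$ that is invariant under translations by a linear subspace $W \subseteq \R^{2n+1}$.

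Finally, the group structure must be respected. If $W$ is itself a homogeneous subgroup, then AD-regularity plus full invariance forces $\nu$ to be a constant multiple of $\H^s \lfloor W$, i.e.\ $\nu \in \H(n, s)$. If $W$ instead corresponds to a subspace $L \in G(2n)$ that fails to be a Heisenberg subgroup, Lemma \ref{subgroup} provides a further point $p \in L$ where one more blow-up converts $L$ into a vertical subgroup $M \in \V_{\dim L}$ of the same linear dimension; a final application of Lemma \ref{ittanm} then lands the resulting measure in $\H(n, s)$. Since horizontal subgroups have integer Hausdorff dimension $d \in [0, 2n]$ and vertical subgroups have integer Hausdorff dimension $d+1 \in [2, 2n+1]$, together with $s > 0$ this yields $s \in \{1, 2, \dots, 2n+1\}$, proving (i), while the same tangent witnesses (ii). The main technical obstacle I anticipate is the symmetrization step: quantifying how antisymmetry and the bounded truncated integrals force each blow-up to add an invariance direction is considerably more delicate in the non-abelian setting of $\hn$, where horizontal and vertical coordinates scale differently under $\delta_r$ and the group law couples them through $A(p,q)$, than the corresponding argument in $\R^n$.
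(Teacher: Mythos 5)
Your high-level architecture matches the paper's: transfer the bounds to tangent measures, iterate blow-ups via Lemma \ref{ittanm}, handle non-subgroup subspaces with Lemma \ref{subgroup}, and read off the integer dimension from AD-regularity of the limiting measure supported on a homogeneous subgroup. But the central step --- which you yourself flag as ``the main technical obstacle'' --- is left entirely open, and the mechanism you sketch for it is not the one that works. You claim that antisymmetry of $R_s$ plus the uniform truncated bounds force each blow-up to \emph{gain a direction of translation invariance}. The paper's engine, Proposition \ref{vertprop}, proves something different and weaker per step: if an AD-regular tangent measure with the annular bound (\ref{tangbound}) has support strictly contained in a vertical subgroup $V_L$ (or in a horizontal subspace $L$), then some further tangent measure has support contained in a strictly lower-dimensional $V_M$, or in $L$, or in $H\cap L$. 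The proof is a touching-ball argument: one finds $b\in\spt\nu$ and a ball $U(c,r)$ disjoint from $\spt\nu$ with $b\in\partial B(c,r)$, and the geometric Lemma \ref{boundplanes} shows every blow-up at $b$ is supported in a \emph{half-space} ($V^-_{L(b^{-1}\cdot c)}$ or $H^-(b^{-1}\cdot c)$). The contradiction then comes not from antisymmetry but from its failure to help: if the support does not collapse onto the lower-dimensional set, one finds a sequence of disjoint balls $B_i=B(x_i,c\delta\|x_i\|)$ shrinking to $0$ on which the relevant coordinate of the kernel has a \emph{single sign} and size comparable to $\|x_i\|^{-s}$ times a constant, so the truncated integrals over $B(0,1)\setminus B(0,\|x_k\|/2)$ grow like $k$, contradicting (\ref{tangbound}). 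Nothing in your proposal supplies this one-sidedness, which is exactly what rules out cancellation; without it, ``residual anisotropy produces a growing contribution'' is an assertion, not an argument.

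Two further points. First, you transfer the annular bound only for $\nu$-a.e.\ $p\in\spt\nu$, but the contradiction above is run at specific, non-generic points (the touching point $b$, and $0\in\spt\lambda$ for $\lambda\in\tanm(\nu,b)$); the paper's Lemma \ref{1lm} establishes (\ref{tangbound}) for \emph{all} $x\in\spt\nu$, and you would need that. Second, the endgame is slightly off: the iteration does not directly yield a translation-invariant measure. It yields an iterated tangent whose support \emph{equals} a homogeneous subgroup $V$; one then uses AD-regularity to write the measure as $h\,d\mathcal{H}^s\lfloor V$ with bounded density and takes one more tangent at a point of approximate continuity of $h$ to land in $\mathcal H(n,s)$. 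These are repairable, but the absence of any proof of the dimension-reduction step means the proposal does not yet constitute a proof.
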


Note that (\ref{maxbound}) is satisfied if $\rsm$ is bounded in $L^2(\mu)$.

\begin{proof} The assumption (\ref{maxbound}) is equivalent with,
\begin{equation}
\label{hmaxbound}
\sup_{0<\varepsilon <1} \left| \int _{B(p,1)\setminus B(p,\varepsilon)}
\frac{(p^{-1}\cdot q)_i}{\|p^{-1}\cdot q\|^{s+1}}d \mu q\right| < \infty \text{ for }i=1,..,2n
\end{equation}
and
\begin{equation}
\label{vmaxbound}\sup_{0<\varepsilon <1} \left| \int _{B(p,1)\setminus B(p,\varepsilon)}
\frac{(p^{-1}\cdot q)_{2n+1}}{\|p^{-1}\cdot q\|^{s+2}}d \mu q\right| < \infty.
\end{equation}
\begin{lm}
\label{1lm} Under the assumptions of Theorem \ref{mthm}, for $\mu$ a.e. $a \in \hn$  every $\nu \in \tanm(\mu, a)$ is 
an $s$-AD regular measure and there is $M<\infty$ such that,
\begin{equation}
\label{tangbound}
\sup_{0<r <R<\infty}\left \| \left(\int_{B(x,R)\setminus B(x,r)}
R_{s,i}(x^{-1}\cdot q)d\nu q \right)_{i=1}^{2n+1}\right\| \leq M\ \text{for all}\ x \in \spt \nu.
\end{equation}
\end{lm}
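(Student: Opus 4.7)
The approach is to exploit the $s$-homogeneity of the Riesz kernel, which makes the truncated annular integrals scale-invariant under the blow-ups $T_{a,r}$ combined with the normalization $\mu_r := r^{-s}T_{a,r\#}\mu$. My plan is to first derive AD-regularity of every tangent from (\ref{dens}) and Lemma \ref{tandens}, and then to transfer the pointwise bound (\ref{maxbound}) into a uniform annular bound for $\nu$ using a scaling identity.

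For AD-regularity I would proceed in the standard way (cf.\ Ch.~14 of \cite{M}): for $\mu$-a.e.\ $a$, any $x\in\spt\nu$ is approximated by points $T_{a,r_k}(p_k)$ with $p_k\in\spt\mu$, and the $\mu$-density bounds at these $p_k$, holding uniformly after restriction to a large truncation subset, yield $c\,r^s\le\nu(B(x,r))\le C\,r^s$ for every $x\in\spt\nu$ and $r>0$.

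The key scaling identity follows from $R_{s,i}(\delta_{1/r}p)=r^{s} R_{s,i}(p)$ together with a change of variables: with $x=T_{a,r}(p)$,
$$\int_{B(x,R)\setminus B(x,r')}R_{s,i}(x^{-1}\cdot y)\,d\mu_r y=\int_{B(p,rR)\setminus B(p,rr')}R_{s,i}(p^{-1}\cdot q)\,d\mu q.$$
As soon as $rR\le 1$, the right-hand side is bounded by the constant in (\ref{maxbound}) at $p$. To upgrade the $\mu$-a.e.\ pointwise bound to something usable, I truncate: let $E_N$ be the set of $p\in\spt\mu$ where the supremum in (\ref{maxbound}) does not exceed $N$. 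Then $\mu(\spt\mu\setminus E_N)\to 0$ as $N\to\infty$, so for $\mu$-a.e.\ $a$ some $E_N$ has full $\mu$-density at $a$. Fix such an $a$ and $\nu=\lim\mu_{r_k}$. For arbitrary $x\in\spt\nu$ and $0<r'<R$, I approximate $x$ by $x_k'=T_{a,r_k}(p_k')$ with $p_k'\in E_N\cap\spt\mu$; such $p_k'$ exists for large $k$ because $E_N$ has $\mu$-density $1$ at $a$ at scales $\asymp r_k$, while the AD lower bound on $\nu$ forces $\spt\mu_{r_k}$ to meet every neighbourhood of $x$. The scaling identity then gives, for $r_k R\le 1$,
$$\left|\int_{B(x_k',R)\setminus B(x_k',r')}R_{s,i}(x_k'^{-1}\cdot y)\,d\mu_{r_k} y\right|\le N,$$
and weak convergence $\mu_{r_k}\to\nu$ together with $x_k'\to x$ delivers (\ref{tangbound}) with $M$ depending only on $N$ and the AD-constants.

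The main obstacle is the substitution step $p_k\rightsquigarrow p_k'\in E_N$ while preserving $x_k'\to x$: this is a quantitative density-point argument balancing the $\mu$-density of $E_N$ near $a$ at scale $\asymp r_k\|x\|$ against the AD lower bound on $\mu$. A secondary subtlety is taking the limit of annular integrals with moving centres, which I would handle by passing to radii $r', R$ for which $\nu$ assigns no mass to the bounding spheres; the AD-regularity just established rules out all but countably many such bad radii, so the bound at generic $r', R$ extends to all pairs by monotonicity.
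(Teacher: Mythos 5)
Your proposal follows essentially the same route as the paper's proof: truncate to a set where the supremum in (\ref{maxbound}) is uniformly bounded, work at density points of that set, approximate each $x\in\spt\nu$ by images of points of the truncation set, transfer the annular bound via the $s$-homogeneity/scaling identity, and handle the limit of annular integrals by restricting to radii whose spheres carry no $\nu$-mass and then approximating. The only cosmetic difference is that the annulus bound should be $2N$ rather than $N$ (the integral over $B(p,rR)\setminus B(p,rr')$ is a difference of two truncated integrals from (\ref{maxbound})), which does not affect the argument.
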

\begin{proof} One can prove that when (\ref{dens}) is satisfied for $\mu$ a.e $p \in \hn$, then for 
$\mu$ a.e. $a \in \hn$
every $\nu \in \tanm (\mu, a)$ is $s$-AD regular in an analogous way as in \cite{M}, p.190. Furthermore the
proof of this statement has very similar reasoning with the proof of (\ref{tangbound}).

To prove (\ref{tangbound}), let $\e >0$ and set
$$B=\{p \in \spt \mu : \sup_{0<\delta <1}
\left \| \left(\int_{B(p,1)\setminus B(p,\delta)}R_{s,i}(p^{-1}\cdot q)d\mu q \right)_{i=1}^{2n+1}\right\| < M\}$$
where $M$ is a positive constant that can be chosen such that $\mu(\hn \setminus B)< \e$. 
Furthermore it is enough to consider the
density points of $B$, i.e. the points $a \in B$ such that
$$\lim_{r \ra 0} \frac{\mu (B(a,r)\setminus B)}{r^s}=0.$$ 
For such a point $a \in B$ let $\nu \in \tanm (\mu,a)$. Then, by Lemma \ref{tandens},
there exist a positive number $c$ and a sequence of positive reals $(r_i)$ such that $r_i \ra 0$ and
$$\nu=\lim_{i \ra \infty} cr_i^{-s}T_{a,r_i,\sharp} \mu.$$

Let $x \in \spt \nu$. As in the proof of Lemma \ref{ittan} there exists a sequence $(a_i) \in B$ such that
\begin{equation}
\label{lm1eq} x_i:=\delta_{\frac{1}{r_i}}(a^{-1} \cdot a_i) \ra x.
\end{equation}

Now let $0<r<R<\infty$ be such that $\nu(\partial B(x,r))=\nu(\partial B(x,R))=0$, which is true for all but countably many 
$0<r<R<\infty$. For $j=1,..,2n$,
\begin{equation*}
\begin{split}
&\left|  \int _{B(x,R)\setminus B(x,r)} \frac{(x^{-1}\cdot y)_j}{\|x^{-1}\cdot y\|^{s+1}}d \nu y\right|\\
&=\lim_{i \ra \infty}\left| \int _{B(x_i,R)\setminus B(x_i,r)} \frac{(x_i^{-1}\cdot y)_j}{\| x_i^{-1}\cdot y\|^{s+1}}d \nu y\right|\\
&=\lim_{i \ra \infty}\left|\frac{1}{r_i^s} \int _{B(a \cdot \delta_{r_i}(x_i),Rr_i)\setminus B(a \cdot \delta_{r_i}(x_i),rr_i)} \frac{(x_i^{-1}\cdot \delta_{\frac{1}{r_i}}(a^{-1}\cdot y))_j}{\|(x_i^{-1}\cdot \delta_{\frac{1}{r_i}}(a^{-1}\cdot y)\|^{s+1}}d \mu y\right|\\
&=\lim_{i \ra \infty}\left| \int _{B(a_i,Rr_i)\setminus B(a_i,rr_i)}
\frac{(a_i^{-1}\cdot y)_j}{\| a_i^{-1}\cdot y\|^{s+1}}d \mu y\right|
\leq 2M,
\end{split}
\end{equation*}
where we used that $x_i^{-1} \cdot \delta_{\frac{1}{r_i}} (a^{-1} \cdot y) = 
\delta_{\frac{1}{r_i}} (a_i^{-1} \cdot y)$, $a_i=a \cdot \delta_{r_i}(x_i)$ and $a_i \in B$. In the second equality 
we have actually a double limit, but it is easily checked that it can be expressed as a single limit.

In a similar manner,
\begin{equation*}
\begin{split}
&\left|  \int _{B(x,R)\setminus B(x,r)} \frac{(x^{-1}\cdot y)_{2n+1}}{\|x^{-1}\cdot y\|^{s+2}}d \nu y\right|\\
&=\lim_{i \ra \infty}\left| \int _{B(a_i,Rr_i)\setminus B(a_i,rr_i)}
\frac{(a_i^{-1}\cdot y)_{2n+1}}{\| a_i^{-1}\cdot y\|^{s+2}}d \mu
y\right| \leq 2M^2,
\end{split}
\end{equation*}
By approximation, these estimates for $j =1,...,2n+1$ hold for all
$0<r<R<\infty$.
\end{proof}
\begin{pr}
\label{vertprop} Let $L\in G(2n,d)$ and let $\nu$ be an $s$-AD-regular measure in
$\hn$ such that (\ref{tangbound}) holds.
\begin{enumerate}
\item If $\spt \nu\subset V_L$ and $\spt\nu \not= V_L$, 
\end{enumerate}
then there exist $b\in\spt\nu$ and  
$\sigma \in \tanm (\nu,b)$ such that either 
$\spt \sigma \subset V_M$ for some $M \in G(2n,d-1)$ with $M\subset L$ or $\spt \sigma\subset L$.
\begin{enumerate}
\item [(ii)] If $\spt\nu\subset L$ and $\spt \nu\not=L$, 
\end{enumerate}
then there exist $b\in\spt\nu$ and $\sigma \in \tanm (\nu,b)$ 
such that $\spt \sigma \subset V_M$ for some  $M\in G(2n,d-1)$ with $M\subset L$.
\end{pr}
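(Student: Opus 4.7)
My plan for both parts is a ``blow-up at a boundary point'' scheme producing a tangent measure whose support is confined to a codim-one dilation-invariant subspace of the ambient subgroup. I describe (i) in detail; (ii) is analogous, with Lemma \ref{subgroup} handling the case that $L$ is not a subgroup. Since $\spt\nu$ is a proper closed subset of the homogeneous subgroup $V_L$, the relatively open set $V_L\setminus\spt\nu$ is nonempty, and I would fix a point $b$ in the relative boundary of $\spt\nu$ inside $V_L$ together with, for each $k$, a Heisenberg ball $B(c_k,\rho_k)\subset V_L\setminus\spt\nu$ and a nearby $b_k\in\spt\nu$ with $d(b_k,c_k)\le 2\rho_k$ and $\rho_k\to 0$. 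Arranging $b$ to be a density-preserving point (by approximating the chosen boundary point if necessary), rescaling at $b_k$ by $\rho_k$, and invoking Lemma \ref{tandens} together with Lemma \ref{1lm}, one extracts a tangent measure $\nu_1\in\tanm(\nu,b)$ which is $s$-AD-regular, satisfies \eqref{tangbound}, has $\spt\nu_1\subset V_L$, and misses a Heisenberg ball $B(c_*,1)\subset V_L$.

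The support of $\nu_1$ is still a proper subset of $V_L$, so I would apply the same construction to $\nu_1$ and iterate. Lemma \ref{ittanm} guarantees that iterated tangent measures are themselves tangent measures of $\nu$, so a diagonal extraction should yield $\sigma\in\tanm(\nu,b)$ whose complement inside $V_L$ is an open region of ``half-space'' type. On the other hand, any tangent measure inherits a $\delta_r$-self-similarity at the origin (another application of Lemma \ref{tandens}), forcing $\spt\sigma$ to be a $\delta_r$-invariant closed cone. Combining the cone property with the maximal hole confines $\spt\sigma$ to a $\delta_r$-invariant linear hyperplane $W\subset V_L$. Since Heisenberg dilations scale horizontal coordinates by $r$ and the vertical coordinate by $r^2$, the $\delta_r$-invariant linear hyperplanes of $V_L$ (which has linear dim $d+1$ and contains the vertical axis $\T$) are exactly (a) $W=V_M$ with $M\in G(2n,d-1)$ and $M\subset L$ (the ones containing $\T$) and (b) $W=L$ (the unique one transverse to $\T$, $\delta_r$-invariant since $L\subset\R^{2n}\times\{0\}$). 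This yields the dichotomy in (i).

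For part (ii), if $L$ is a subgroup of $\hn$, the three-step argument runs inside $L$ in place of $V_L$ and produces $\spt\sigma\subset M$ for some $M\in G(2n,d-1)$ with $M\subset L$; since $M\subset V_M$ the conclusion follows. If $L$ is not a subgroup, I would invoke Lemma \ref{subgroup} directly: one obtains $p\in L$ (taken in $\spt\nu$ by a density argument, possible because the set of $p\in L$ with $A(p,q)\ne 0$ for some $q\in L$ is dense and $\spt\nu$ is $s$-AD-regular) such that $\delta_{1/r_i}(p^{-1}L)$ subconverges to a vertical subgroup $V_M$ with $M\in G(2n,d-1)$ and $M\subset L$; the corresponding tangent measure $\sigma\in\tanm(\nu,p)$ then inherits $\spt\sigma\subset V_M$.

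The main obstacle I anticipate is executing the iteration in the second paragraph: arranging successive blow-ups so that the missing regions grow in a controlled way under the anisotropic Heisenberg dilations and converge to a genuine half-space of $V_L$, rather than to some pathological accumulation pattern. The Riesz-bound \eqref{tangbound}, although preserved through all tangent measures by Lemma \ref{1lm}, is not used directly in the structural conclusion of this proposition; it plays its role in the downstream applications toward Theorem \ref{mthm}.
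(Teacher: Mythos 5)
There is a genuine gap, and it is located exactly where you wave it away: your closing claim that the Riesz bound (\ref{tangbound}) ``is not used directly in the structural conclusion of this proposition'' is false, and without that bound the proposition itself is false. An $s$-dimensional self-similar Cantor set supported in $V_L$ gives an AD-regular measure whose support is a proper subset of $V_L$ but all of whose tangent measures are again Cantor-like and are supported in no lower-dimensional subgroup; only the singular-integral hypothesis rules this out. Concretely, two steps of your scheme do not hold: (a) a tangent measure obtained via Lemma \ref{tandens} is \emph{not} in general $\delta_r$-invariant, so its support need not be a dilation-invariant cone; and (b) even a genuine $\delta_r$-invariant closed cone that misses an open ``half-space type'' region need not lie in a hyperplane (a union of two hyperplanes, or a Cantor set of directions swept out by dilations, already defeats this). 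So the iteration you outline cannot close without an additional analytic input.

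The paper's proof supplies that input as follows. It argues by contradiction, assuming that every tangent measure $\pi$ at every point has support meeting, at all scales, a cone transverse to every candidate hyperplane (conditions (\ref{sigvertnon}), (\ref{sighornon})). It then picks an empty ball $U(c,r)$ touching $\spt\nu$ at $b$ and proves (Lemma \ref{boundplanes}) that every blow-up at $b$ is supported in a closed half-space $V^-_{L(b^{-1}\cdot c)}$ (or $H^-(b^{-1}\cdot c)$ when $b^{-1}\cdot c\in\T$) bounded by an explicit hyperplane through the origin. The punch line is the interaction of this one-sided mass distribution with the antisymmetric kernel: if $\spt\lambda$ lies in $\{p_{2n}\ge 0\}$ yet contains points $x_i\to 0$ with $x_{i,2n}>\delta\|x_i\|$, then every ball $B(x_i,\delta\|x_i\|/2)$ contributes a fixed positive amount to $\int y_{2n}\|y\|^{-s-1}\,d\lambda y$ with no cancellation, so the truncated integrals diverge, contradicting (\ref{tangbound}), which is inherited by all tangent measures via Lemma \ref{1lm}. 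This forces the tangential approach to a hyperplane, and a further blow-up (legitimately a tangent measure of $\nu$ at $b$ by the Preiss-type lemma) lands in $V_M$ or in $L$. Your appeal to Lemma \ref{subgroup} in case (ii) is also misplaced: that lemma is used later, in the proof of Theorem \ref{mthm}, to dispose of the case $\spt\rho=L$ with $L$ not a subgroup; within the proposition, case (ii) is handled simply by choosing the empty ball with $c\in L$, which guarantees $b^{-1}\cdot c\notin\T$ and puts you in the vertical-hyperplane branch.
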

\begin{proof} Given $b\in\spt\nu$ and $\pi \in \tanm (\nu,b)$, consider the following two statements:
\begin{enumerate}
\item [(iii)]For some $M \in G(2n,d-1)$ and every $\delta \in (0,1)$ there exists $\e>0$ such that
$\spt \pi \cap B(\e) \cap \{p \in \hn : d_E(p',M)>\delta \| p\|\}=\emptyset$,
\item [(iv)] For every $\delta \in (0,1)$ there exists $\e>0$ such that
$\spt \pi \cap B(\e) \cap \{p \in \hn : \sqrt {|p_{2n+1}|}>\delta \| p\|\}=\emptyset.$
\end{enumerate}
We shall verify that in order to prove the proposition, it is enough to prove that 
there exist $b\in\spt\nu$ and $\pi \in \tanm (\nu,b)$ such that 
(iii) or (iv) holds in the case (i) and (iii) holds in the case (ii).
In order to see that this, suppose that $b\in\spt\nu$ and $\pi \in \tanm (\nu,b)$. Then, recalling Lemma \ref{tandens},
$0\in\spt\pi$.  
Let $\sigma \in \tanm (\pi,0)$ be such that 
$\sigma= \lim_{i \ra\infty}\frac{1}{r_i^s}T_{0,r_i,\sharp} \pi$ for some sequence
of positive reals $(r_i)$ such that $r_i \ra 0$. Consider first the case (i) and suppose that (iii) holds. Then for all 
$R>0,\delta \in (0,1)$ and $G_{R,\delta}=\{p \in U(R):d_E(p',M)>\delta \|p\|\}$,
\begin{equation*}
\begin{split}
\sigma(G_{R,\delta})&\leq \liminf_{i \ra \infty} \frac{1}{r_i^s} \pi (T_{0,r_i}^{-1}(G_{R,\delta}))\\
&\leq \liminf_{i \ra \infty}\frac{1}{r_i^s} \pi (\{p \in \hn: \delta_{\frac{1}{r_i}}(p) \in B(R) 
\text{ and } d_E \left(\frac{p'}{r_i},M\right)>\delta \|\delta_{\frac{1}{r_i}}(p)\|\})\\
&=\liminf_{i \ra \infty} \frac{1}{r_i^s}\pi( B(r_iR) \cap \{p \in \hn : d_E(p',M)> \delta \|p\|\})\\
&=0,
\end{split}
\end{equation*}
where the last equality follows by (iii). Since $\sigma (G_{R,\delta})=0$ for all $R>0$ and $\delta \in (0,1)$ 
we deduce that $\spt \sigma \subset V_M$. In the same way (iv) implies that 
$\spt \sigma \subset H$. Clearly $\spt\sigma\subset V_L$, and so $\spt \sigma \subset H\cap L$. 

As $\sigma$ is a tangent measure of $\pi \in \tanm (\nu,b)$ at 0, it is easy to check that 
$\sigma \in \tanm (\nu,b)$, whence we have verified in the case (i) the sufficiency of the conditions (iii) and (iv). In the same way (iii) suffices in the case (ii).

Suppose that $\spt \nu\subset V_L$ and $\spt\nu \not= V_L$, and 
by way of contradiction assume that for all $b \in \spt \nu$, all $\pi \in \tanm (\nu,b)$ and every $M \in
G(2n,d-1)$ there exist $\delta_{\pi,M},\delta_{\pi_H} \in
(0,1)$ such that for all $\e>0$
\begin{equation}
\label{sigvertnon}
B(\e)\cap \spt \pi \cap \{p \in \hn:d_E(p',M)>\delta_{\pi,M}\|p\|\}\neq \emptyset
\end{equation}
and
\begin{equation}
\label{sighornon}
B(\e)\cap \spt \pi \cap \{p \in \hn:\sqrt{|p_{2n+1}|}>\delta_{\pi_H}\|p\|\}\neq \emptyset.
\end{equation}

We proceed with a geometric lemma, but first we introduce some notation. We denote by $(\cdot,\cdot)$ 
the usual inner product in $\R^{2n}$. For $c\in\hn$, define
\begin{equation*}
\begin{split}
&a_j(c)=|c'|^2c_j+c_{2n+1}c_{n+j} \text{ for }j=1,\dots,n, \\
&a_j(c)=|c'|^2c_j-c_{2n+1}c_{j-n}\text{ for }j=n+1,\dots,2n,\\
&a(c)=(a_1(c),\dots,a_{2n}(c)),\\
&L(c)=\{y \in \R^{2n}:(a(c),y)=0\},\\
&V_L^+(c)=\{y \in \hn:(a(c),y)\geq0\},\\
&V_L^-(c)=\{y \in \hn:(a(c),y)\leq0\},\\
&H^+(c)=\{y\in\hn:y_{2n+1}c_{2n+1}\geq0\},\\
&H^-(c)=\{y\in\hn:y_{2n+1}c_{2n+1}\leq0\}.
\end{split}
\end{equation*}

\begin{lm}
\label{dimension} Let $L\in G(2n,d)$ and $c\in V_L\setminus\T$. Then  
$\dim L\cap L(c)<d$.
\end{lm}
\begin{proof}From the definition of $a(c)$ we see that $(a(c),c')=|c'|^4>0$. Hence $c'\in L\setminus L(c)$, 
which implies the lemma.
\end{proof}

\begin{lm}
\label{boundplanes} Let $L\in G(2n,d), b,c\in V_L,r>0$  and let the sequences 
$(p_i) \in V_L, (r_i) \in \R^+$ be such that
\begin{enumerate}
\item $b \in \partial B(c,r)$,
\item $d(p_i,c)\geq r$ for all $i \in \N$,
\item $\lim_{i \ra \infty} p_i=b$,
\item $\lim_{i \ra \infty} r_i=0$,
\item $\lim_{i \ra \infty}\delta_{\frac{1}{r_i}}(b^{-1}\cdot p_i)=p_*$.
\end{enumerate}
If $b^{-1}\cdot c \notin \T$ then $p_* \in V_L\cap V_{L(b^{-1}\cdot c)}^-$. If 
$b^{-1}\cdot c\in\T$, then $p_* \in H^-(b^{-1}\cdot c)$.
\end{lm}
\begin{proof} Replacing $c$ by $b^{-1}\cdot c$ and $p_i$ by $b^{-1}\cdot p_i$, we may assume that $b=0$.
First assume that $c \notin \T$. Then as $d(p_i,c)\geq r$ and $d(0,c)=r$
\begin{equation}
\label{1vertequ}
|p'_i-c'|^4+\left| p_{i,2n+1}-c_{2n+1}-A(p_i,c)\right|^2\geq r^4
\end{equation}
and
\begin{equation}
\label{2vertequ}
|c'|^4+|c_{2n+1}|^2=r^4.
\end{equation}
Here (\ref{1vertequ}) can be written as,
\begin{equation*}
\begin{split}
|p'_i|^4&+|c'|^4+2|p'_i|^2|c'|^2-4(p'_i,c')|p'_i|^2-4(p'_i,c')|c'|^2+4(p'_i,c')^2\\
&+|p_{i,2n+1}|^2+|c_{2n+1}|^2-2p_{i,2n+1}c_{2n+1}-2p_{i,2n+1}A(p_i,c)\\
&+2c_{2n+1}A(p_i,c)+A(p_i,c)^2\geq r^4.
\end{split}
\end{equation*}
After using (\ref{2vertequ}), dividing by $r_i$ and letting $i \ra \infty$ we obtain,
\begin{equation}
-2(a(c),p^*)=-2|c'|^2(p'_*,c')+c_{2n+1}A(p'_*,c')\geq0.
\end{equation}
Therefore $p_* \in V_{L(c)}^-$. So the lemma is proven in the case $c\notin\T$.

Now let $c \in \T$. As in the previous case, combining that $c'=0$, $d(p_i,c)\geq r$ and $d(0,c)=r$, 
we get
$$|p'_i|^4+p_{i,2n+1}^2-2p_{i,2n+1}c_{2n+1}\geq0.$$
After dividing by $r_i^2$ and letting $i \ra \infty$ we conclude that
$$c_{2n+1}p_{*,2n+1}\leq0.$$
As $c_{2n+1} \neq 0$, $p_* \in H^-(c)$.
\end{proof}

As $\spt \nu\subset V_L$ and $\spt\nu \not= V_L$ there
exist $b,c \in \hn$ and $r>0$ such that
\begin{equation*}
\begin{split}
b \in B(c,r) \cap \spt \nu,  \\
U(c,r) \cap \spt \nu =\emptyset.
\end{split}
\end{equation*}
If we have (ii) of Proposition \ref{vertprop}, that is, $\spt\nu\subset L$ and $\spt \nu\not=L$, 
we can take $c\in L$, and so $b^{-1}\cdot c\notin \T$.

We first consider the case when $b^{-1}\cdot c \notin \T$. We shall prove that if $\lambda \in \tanm (\nu,b)$, then 
\begin{equation}
\label{verthalf} \spt \lambda \subset V^-_{L(b^{-1}\cdot c)} \cap V_L\ \text{and}\ \dim L(b^{-1}\cdot c) \cap L<d.
\end{equation}
As $\lambda \in \tanm (\nu,b)$, by Lemma \ref{tandens}, there exist positive
numbers $C$ and $r_i, r_i \ra 0$, such that 
$\lambda =\lim_{i \ra\infty} C\frac{1}{r_i^s} T_{b,r_i\sharp} \nu$. Then for all $R>0,
\delta \in (0,1)$ and
$$G_{R,\delta}=U(R)\setminus V^-_{L(b^{-1}\cdot c)} \cap \{p \in \hn: d_E(p',L(b^{-1}\cdot c))>\delta \|p\|\},$$
we get
\begin{equation*}
\begin{split}
&C^{-1}\lambda(G_{R,\delta})\\
&\leq \liminf_{i \ra \infty}\frac{1}{r_i^s}\nu
(T_{b,r_i}^{-1}(G_{R,\delta})) \\
&=\liminf_{i \ra \infty} \frac{1}{r_i^s}\nu (U(b,r_i R) \cap \{p \in \hn:b^{-1}\cdot p\notin V^-_{L(b^{-1}\cdot c)}, 
 d_E((b^{-1}\cdot p)',L(b^{-1}\cdot  c))>\delta d(b,p)\}).
\end{split}
\end{equation*}
Therefore, in order to prove the inclusion in (\ref{verthalf}), it is enough to show that there exists 
some $\e >0$ such
that,
\begin{equation*}
\spt \nu \cap B(b,\e)\cap\{p \in \hn:b^{-1}p\in\ V^+_{L(b^{-1}\cdot c)}, d_E((b^{-1}\cdot p)',L(b^{-1}\cdot c))>\delta d(b,p)\} = \emptyset.
\end{equation*}
By way of contradiction suppose that there exists a sequence
$(p_i)\in \hn, p_i \ra b$, satisfying for all $i \in \N$,
\begin{enumerate}
\item $p_i \in \spt \nu\subset\hn\setminus U(c,r),$
\item $b^{-1}\cdot p_i \in V^+_{L(b^{-1}\cdot c)},$
\item $d_E((b^{-1}\cdot p_i)',L(b^{-1}\cdot c))>\delta d(b,p_i).$
\end{enumerate}
The new sequence $\delta_{\frac{1}{d(b^{-1},p_i)}}(b^{-1}\cdot p_i) \in B(1)$ has a
converging subsequence and for simplifying notation we write,
$$\delta_{\frac{1}{d(b^{-1},p_i)}}(b^{-1}\cdot p_i) \ra p_*.$$

Notice that by (ii) and (iii) $p_* \notin V^-_{L(b^{-1}\cdot c)}$. But by Lemma 
\ref{boundplanes}, $p_* \in V^-_{L(b^{-1} \cdot c)}$ and we have reached a contradiction. Recalling Lemma \ref{dimension}, (\ref{verthalf}) follows.

Combining (\ref{sigvertnon}) and (\ref{verthalf}) we obtain that
there exists $\delta=\delta_{\lambda,L(b^{-1}\cdot c)}\in (0,1)$ such that for all $r>0$
$$\spt \lambda \cap B(r) \cap V^-_{L(b^{-1}\cdot c)} \cap \{ p \in \hn : d_E(p',L(b^{-1}\cdot c))> \delta \|p\|\} \neq \emptyset.$$
Without loss of generality we can assume that $V^-_{L(b^{-1}\cdot c)}= \{p \in
\hn: p_{2n}>0\}$. Hence there exists a sequence $(x_i),x_i \ra 0,$
such that for all $i \in \N$,
$$x_i \in \spt \lambda \cap \{ p \in \hn : p_{2n}> \delta \|p\|\}.$$
Furthermore this sequence can be chosen to satisfy,
$$\|x_1\|>\|x_2\|>....$$
and the balls $B_i=B(x_i, \frac{\delta \|x_i\|}{2})$ can be assumed
to be disjoint and contained in $B(0,1)$. Then for all $k \in \N$, by the AD-regularity of $\lambda$,
\begin{equation*}
\begin{split}
\int_{B(0,1) \setminus
B(0,\frac{\|x_k\|}{2})}\frac{y_{2n}}{\|y\|^{s+1}}d \lambda y &\geq
\sum_{i=1}^{k} \int_{B_i}\frac{y_{2n}}{\|y\|^{s+1}}d \lambda y \\
&\geq \sum_{i=1}^{k} \frac{\frac{\delta
\|x_i\|}{2}}{\frac{3^{s+1}\|x_i\|^{s+1}}{2^{s+1}}} \lambda (B_i) \\
&\geq C\sum_{i=1}^{k} \frac{\|x_i\|\|x_i\|^s}{\|x_i\|^{s+1}}\\
&=Ck,
\end{split}
\end{equation*}
where $C$ is independent of $k$. Hence 
$$\lim_{k \ra \infty} \int_{B(0,1) \setminus
B(0,\frac{\|x_k\|}{2})}\frac{y_{2n}}{\|y\|^{s+1}}d \lambda y =
\infty.$$
This is a contradiction by Lemma \ref{1lm} as $0 \in \spt \lambda$.

We are now left to consider the case when $b^{-1}\cdot c \in \T$.  In an
identical way as in the proof of (\ref{verthalf}) we can show that
if $ \lambda \in \tanm (\nu, b)$,
\begin{equation}
\label{horhalf} \spt \lambda \subset H^-(b^{-1}\cdot c).
\end{equation}
Combining (\ref{sighornon}) and (\ref{horhalf}) we deduce that there
exists some $\delta=\delta_\lambda$ such that for all $r>0$,
\begin{equation}
\label{horfincontr} \spt \lambda \cap B(r) \cap \{ p \in \hn : p_{2n+1}>0, 
\sqrt{p_{2n+1}}> \delta \|p\|\} \neq \emptyset.
\end{equation}
assuming that $H^-(b^{-1}\cdot c)=\{p \in \hn:p_{2n+1}>0\}$. Finally in order to
complete the estimates, which are otherwise identical with the ones in the case where $b^{-1}\cdot c
\notin \T$,  we need the following simple lemma.

\begin{lm}
\label{horest} Let $x \in \hn$ and $\delta \in (0,1)$ such that $x_{2n+1}>0$ and 
$\sqrt{x_{2n+1}}>\delta \|x\|.$ Then for all $y \in
B(x,\frac{\delta^2 \|x\|}{100n})$, $$y_{2n+1} \geq \frac{\delta^2
\|x\|}{2}.$$
\end{lm}
\begin{proof} As,
$$|y_{2n+1}-x_{2n+1}| \leq \|y \cdot x^{-1}\|^2+2\left|\sum_{i=1}^n(y_i x_{i+n}- y_{i+n}x_i)\right|,$$
for $y \in B(x,\frac{\delta^2 \|x\|}{100n})$ we get,
\begin{equation*}
\begin{split}
|y_{2n+1}-x_{2n+1}| &\leq \|y \cdot
x^{-1}\|^2+2\left|\sum_{i=1}^n(y_i x_{i+n}-x_ix_{i+n}+x_ix_{i+n}
-y_{i+n}x_i)\right| \\
&\leq \|y \cdot x^{-1}\|^2+2\sum_{i=1}^n|y_i-x_i||x_{i+n}|+|x_{i+n}
-y_{i+n}||x_i|\\
&\leq \|y \cdot x^{-1}\|^2+2\sum_{i=1}^n 2 \|x\|\frac{\delta^2
\|x\|}{100n} \\
&\leq \frac{\delta^4 \|x\|^2}{(100n)^2}+\frac{\delta^2 \|x\|^2}{25}\\
&\leq \frac{\delta^2 \|x\|^2}{10}.
\end{split}
\end{equation*}
Therefore,
$$y_{2n+1}\geq x_{2n+1}-\frac{\delta^2 \|x\|^2}{10}>\frac{\delta^2 \|x\|^2}{2}.$$
\end{proof}
By (\ref{horfincontr}), there exists some sequence $(x_i),x_i \in
\spt \lambda$ such that $\sqrt{x_{i,2n+1}}>\delta \|x_i\|$ for all
$i \in \N$ and $x_i \ra 0$. As before we can assume that
$$\|x_1\|>\|x_2\|>....$$
and the balls $B_i=B(x_i, \frac{\delta^2 \|x_i\|}{100n})$ are
disjoint and contained in $B(0,1)$. Then for all $k \in \N$,
\begin{equation*}
\begin{split}
\int_{B(0,1) \setminus
B(0,\frac{\|x_k\|}{2})}\frac{y_{2n+1}}{\|y\|^{s+2}}d \lambda y &\geq
\sum_{i=1}^{k} \int_{B_i}\frac{y_{2n+1}}{\|y\|^{s+2}}d \lambda y \\
&\geq \sum_{i=1}^{k} \frac{\delta^2 \|x_i\|^2}{2\cdot 2^{s+2}\|x_i\|^{s+2}} \lambda (B_i) \\
&\geq C\sum_{i=1}^{k} \frac{\|x_i\|^2 \|x_i\|^s}{\|x_i\|^{s+2}}\\
&=Ck,
\end{split}
\end{equation*}
where we used Lemma \ref{horest} in the third inequality and $C>0$ is independent of $k$. As in the
previous case we have reached a contradiction by (\ref{1lm}). The proof of 
Proposition \ref{vertprop} is now complete.

\end{proof}

We now continue with the proof of Theorem
\ref{mthm}. We first apply for $\mu$ a.e. $a\in\hn$ Lemma \ref{1lm} to get an $s$-AD-regular $\nu\in\tanm(\mu,a)$ 
with the property (\ref{tangbound}). Then we apply to $\nu$, to its tangent measures, and so on, Proposition 
\ref{vertprop} sufficiently many times to find for $\mu$ a.e. $a\in\hn$ an integer 
$d\in[1,2n], L\in G(2n,d)$ and $\pi\in \ittanm(\mu,a)$ such that denoting $V=\spt\pi$, 
$V=V_L$ or $V=L$. If $V=L$ and $L$ is not a subgroup, we take a tangent measure $\rho\in\tanm(\pi,p)$ at a point $p\in L$ as in Lemma \ref{subgroup}. Then by Lemma \ref{subgroup} its support is $V_M$ for some $M\in G(2n,d-1)$. Thus in 
any case we find for $\mu$ a.e. $a\in\hn$ an integer 
$d'\in[1,2n], L\in G(2n,d')$ and $\rho\in \ittanm(\mu,a)$ such that denoting $V=\spt\rho$, 
$V=V_L$ or $V=L$ and $L$ is a homogeneous subgroup.
As $\rho$ is AD-regular, $s$ must be the Hausdorff dimension of $V$, that is, $s=d$ or $s=d+1$ 
and so $s$ is an integer. Furthermore, by the AD-regularity, 
$\rho$ is absolutely continuous with respect to $\mathcal{H}^s \lfloor V$ with bounded density $h$. 
Taking a tangent measure at a point of approximate continuity of $h$ we obtain 
$\sigma\in \tanm (\rho,b)\subset \ittanm(\mu,a)$ such that $\sigma=\mathcal{H}^s \lfloor V\in\mathcal H(n,s)$. 
Applying still Lemma \ref{ittanm} we find that $\mu$ has almost everywhere tangent measures in $\mathcal H(n,s)$.
This completes the proof of Theorem \ref{mthm}.

\end{proof}

\section{Riesz transforms and self-similar sets in $\hn$}

In this section we consider certain families of self-similar sets in
$\hn$ and we discuss their relations with the Riesz transforms that
we introduced earlier.

\begin{df}
\label{similitudes} Let $Q=[0,1]^{2n} \subset \R^{2n}$ and $r \in
(0,\frac{1}{2}).$ Let $z_j \in \R^{2n}, j=1,...,2^{2n},$ be distinct
points such that $z_{j,i} \in \{0,1-r\}$ for all $j=1,..,2^{2n}$ and
$i=1,..,2n$. We consider the following $2^{2n+2}$ similarities
depending on the parameter $r$,
\begin{equation*}
\begin{split}
S_j&=\tau_{(z_j,0)} \delta_r, \text{ for }j=1,...,2^{2n}, \\
S_j&=\t_{(z_{\lfloor j \rfloor_ {2^{2n}}},\frac{1}{4})}\delta_r, \text{ for }j=2^{2n}+1,...,2\cdot2^{2n}, \\
S_j&=\t_{(z_{\lfloor j \rfloor_ {2 \cdot 2^{2n}}},\frac{1}{2})}\delta_r, \text{ for }j=2 \cdot 2^{2n}+1,...,3\cdot2^{2n}, \\
S_j&=\t_{(z_{\lfloor j \rfloor_ {3 \cdot 2^{2n}}},\frac{3}{4})}\delta_r, \text{ for }j=3 \cdot 2^{2n}+1,...,2^{2n+2}, \\
\end{split}
\end{equation*}
where $\lfloor j \rfloor_m:=j \mod m$.
\end{df}

\begin{thm}
\label{selfsimilar} Let $r \in (0,\frac{1}{2})$ and
$\mathcal{S}_r=\{S_1,...,S_{2^{2n+2}}\}$ where the $S_j$'s are the
similarities of Definition \ref{similitudes}. Let $K_r$ be the
self-similar set defined by,
$$K_r=\bigcup_{j=1}^{2^{2n+2}}S_j (K_r).$$
Then the the sets $S_j(K_r)$ are disjoint for $j=1,..,2^{2n+2}$,
$$K_r = \bigcap_{k=1}^{\infty} \bigcup _{j_1,..,j_k =1}^{2^{2n+2}} S_{j_1}\circ..\circ
S_{j_k}(K_r)$$ and
$$0< \mathcal{H}^{a}(K_r)< \infty \text{ with }a=\frac{(2n+2)\log(2) }{\log (\frac{1}{r})}.$$
\end{thm}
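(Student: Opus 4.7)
Each $S_j$ is a contracting similarity of the complete metric space $(\hn,d)$ with ratio $r<1$, because $\delta_r$ scales the Kor\'anyi metric by $r$ and left translations are $d$-iso\-metries. The strategy is: invoke Hutchinson's fixed-point theorem for existence and the iteration identity; verify the strong separation condition $S_i(K_r)\cap S_j(K_r)=\emptyset$ for $i\ne j$ by a case split exploiting the structure of the group law; and conclude the $\H^a$-estimates by a standard covering argument paired with the mass-distribution principle.

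\textbf{Existence, intersection formula, upper bound.} By Hutchinson's theorem applied to $F(A)=\bigcup_j S_j(A)$, there is a unique nonempty compact $K_r$ with $K_r=F(K_r)$. Iterating yields $K_r=F^k(K_r)=\bigcup_{|\omega|=k}S_\omega(K_r)$ for every $k\ge 1$, which is precisely the stated intersection identity. The box $A=[0,1]^{2n}\times[V_-,V_+]$ with $V_-=-\tfrac{2rn}{1+r}$ and $V_+=\tfrac{3}{4(1-r^2)}+\tfrac{2rn}{1+r}$ is $F$-invariant (using that $|A((z_j,0),\delta_r p)|\le 2rn(1-r)$ whenever $p'\in[0,1]^{2n}$); hence $K_r\subset A$ has finite diameter, and covering $K_r$ by the $N^k$ pieces $S_\omega(K_r)$, each of diameter $r^k\diam(K_r)$, gives $\H^a(K_r)\le\diam(K_r)^a<\infty$, since $Nr^a=1$ with $N=2^{2n+2}$.

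\textbf{Strong separation.} From the invariance, the horizontal projection satisfies $\pi(K_r)\subset[0,1]^{2n}$. Two cases arise. \emph{(i)} If $z_i\ne z_j$, some coordinate $k$ has $\{z_{i,k},z_{j,k}\}=\{0,1-r\}$, so $\pi_k(S_i(K_r))\subset[0,r]$ while $\pi_k(S_j(K_r))\subset[1-r,1]$; these are Euclideanly disjoint because $r<1/2$, hence so are $S_i(K_r)$ and $S_j(K_r)$. \emph{(ii)} If $z_i=z_j$ but $t_i\ne t_j$, suppose $S_i(p)=S_j(q)$ for some $p,q\in K_r$. Horizontal equality forces $p'=q'$, and since $A((z_i,0),\delta_r p)$ depends only on $p'$, the vertical equation collapses to $r^2(q_{2n+1}-p_{2n+1})=t_i-t_j$. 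Writing $p=S_{j_1}(p_1)$, $q=S_{j_1'}(q_1)$ and applying case (i) inductively at every level shows $p$ and $q$ share the full sequence of $z$-choices, so their vertical coordinates differ by a series $\sum_{k\ge 1}r^{2(k-1)}(t_{j_k}-t_{j_k'})$ of absolute value at most $\tfrac{3/4}{1-r^2}$. This yields $|r^2(q_{2n+1}-p_{2n+1})|\le\tfrac{3r^2}{4(1-r^2)}<\tfrac14\le|t_i-t_j|$ whenever $r<1/2$, a contradiction.

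\textbf{Lower bound and main obstacle.} With separation in hand, set $\e_0=\min_{i\ne j}\dist(S_i(K_r),S_j(K_r))>0$ and define the Bernoulli self-similar measure $\mu$ on $K_r$ by $\mu(S_\omega(K_r))=N^{-|\omega|}$. By self-similarity, distinct cylinders at level $k$ are separated by at least $r^{k-1}\e_0$; so for any $p\in K_r$ and $\rho>0$, choosing $k$ with $r^{k-1}\e_0>\rho$ forces $B(p,\rho)$ to meet only the unique length-$k$ cylinder containing $p$, giving $\mu(B(p,\rho))\le N^{-k}\le C\rho^a$. Frostman's lemma then yields $\H^a(K_r)\ge\mu(K_r)/C>0$. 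The main technical hurdle is case (ii) above: the $A$-cocycle in the Heisenberg group law a priori couples horizontal and vertical coordinates, and the argument works only because this cocycle depends solely on horizontal data, which the horizontal-equality constraint forces to coincide and thereby cancel.
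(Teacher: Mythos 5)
Your proof is correct, but it takes a genuinely different route from the paper. The paper reduces everything to producing a single compact region $R\supset K_r$ with $S_j(R)\subset R$ and the $S_j(R)$ pairwise disjoint, and then cites Schief and Balogh--Rohner for the conclusion $0<\mathcal{H}^a(K_r)<\infty$; the region $R$ is built, following Strichartz, as the unit-height slab $\{q: q'\in Q,\ \f(q')\le q_{2n+1}\le \f(q')+1\}$ over a continuous $\f$ obtained as the fixed point of a contraction on $C(Q)$ solving the self-affine functional equation forced by the group law. Once $R$ exists, disjointness is immediate: the $Q_j$ separate the pieces horizontally, and the four levels $0,\tfrac14,\tfrac12,\tfrac34$ separate slabs of height $r^2<\tfrac14$ vertically. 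You instead prove strong separation directly on the attractor: horizontal disjointness when $z_i\ne z_j$, and, when $z_i=z_j$, the observation that the cocycle $A((z,t),\delta_r p)$ depends only on $z$ and $p'$, so that horizontal equality forces the $z$-codings of $p$ and $q$ to coincide at every level and the vertical discrepancy telescopes into the geometric series $\sum_k r^{2(k-1)}(t_{j_k}-t_{j_k'})$, whose size $\le \tfrac{3}{4(1-r^2)}r^2<\tfrac14\le|t_i-t_j|$ for $r<\tfrac12$ gives the contradiction; you then reprove the measure bounds by hand (covering for the upper bound, Bernoulli measure plus the mass distribution principle for the lower bound) rather than quoting the open-set-condition literature. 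Your version is more self-contained and makes explicit exactly where the condition $r<\tfrac12$ and the spacing $\tfrac14$ of the vertical translations are used; the paper's version is shorter at the cost of the fixed-point construction of $\f$ and the external citations, and its invariant region makes the separation geometrically transparent. The only blemishes in your write-up are cosmetic: you overload the letter $A$ for both the invariant box and the group cocycle, and the phrase ``the unique length-$k$ cylinder containing $p$'' should be read as ``some cylinder, from which all others are at distance $\ge r^{k-1}\e_0$,'' since a priori uniqueness of the coding is exactly what the separation establishes; neither affects the argument.
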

\begin{proof} It is enough to find some set $R \supset K$ such that for all
$j=1,..,2^{2n+2}$,
\begin{enumerate}
\item $S_j(R) \subset R$ and
\item the sets $S_j(R)$ are disjoint,
\end{enumerate}
see \cite{s} and \cite{br}. Using an idea of Strichartz from \cite{St} we show that there exists a
continuous function $\f : Q \ra \R$ such that the set
$$R=\{q \in \hn : q' \in Q \text{ and } \f (q') \leq q_{2n+1} \leq \f (q')+1\}$$
satisfies (i) and (ii).

This will follow immediately if we find some continuous $\f : Q \ra
\R$ which satisfies for all $j=1,..,2^{2n}$,
\begin{equation}
\label{ss1} \tau_{(z_j,0)} \delta_r (R) = \{q \in \hn : q' \in Q_j
\text{ and } \f (q') \leq q_{2n+1} \leq \f (q')+r^2\},
\end{equation}
where $Q_j=\t_{(z_j,0)}(\delta_r(Q))$. Since
\begin{equation*}
\begin{split}
\tau_{(z_j,0)} \delta_r (R) = \{p \in \hn : p' \in Q_j \text{ and } 
&r^2\f (\frac{p'-z_j}{r})-2\sum_{i=1}^{n}
(z_{j,i}p_{i+n}-z_{j,i+n}p_i) \leq p_{2n+1}\\
&\leq r^2\f (\frac{p'-z_j}{r})-2\sum_{i=1}^{n}
(z_{j,i}p_{i+n}-z_{j,i+n}p_i)+r^2 \}.
\end{split}
\end{equation*}
proving (\ref{ss1}) amounts to showing that
\begin{equation}
\label{ss2} \f (w)= r^2\f (\frac{w-z_j}{r})-2\sum_{i=1}^{n}
(z_{j,i}w_{i+n}-z_{j,i+n}w_i) \text{ for }w \in Q_j,j=1,..,2^{2n}.
\end{equation}

As usual for any metric space $X$, denote $C(X)=\{f:X\ra \R\text{ and }f \ \text{is continuous}\}$. Let $B=\cup_{j=1}^{2^{2n}} Q_j$ and $L: C(B) \ra C(Q)$ be a linear
extension operator such that
\begin{equation*}
L (f) (x)= f(x) \text { for } x \in B
\end{equation*}
and
$$\|L(f)\|_{\infty}= \|f\|_{\infty}.$$

Since the $Q_j$'s are disjoint the operator $L$ can be defined
simply by taking $\e>0$ small enough and letting

\begin{equation*}
L(f)(x)=
\begin{cases} f(x) & \text{ when }x \in B,
\\
\dfrac{\e - \dist(x,B)}{\e} f (\tilde{x})&\text{ when
}0<\dist(x,B)<\e,
\\
0 & \text{ when } \dist(x,B) \geq \e,
\end{cases}
\end{equation*}
where $\tilde{x} \in B$ and $\dist(x,B)=d(x,\tilde{x}).$

Furthermore define the functions $h: B \ra \R $, $\tilde{f} :B \ra \R$,
$$h(w)=-2\sum_{i=1}^{n}
(z_{j,i}w_{i+n}-z_{j,i+n}w_i) \text{ for }w \in Q_j,$$
$$\tilde{f} (w)= r^2 f (\frac{w-z_j}{r}) \text { for }f \in C(Q),w \in Q_j ,$$
and the operator $T:C(B) \ra C(Q)$ as,
$$T(f)=L(\tilde{f} + h).$$
Then
$$T(f)(w)=r^2 f (\frac{w-z_j}{r})-2\sum_{i=1}^{n}
(z_{j,i}w_{i+n}-z_{j,i+n}w_i) \text{ for }w \in Q_j,$$ and for $f,g
\in C(B)$
\begin{equation*}
\|Tf-Tg\|_\infty
=\|L(\tilde{f}-\tilde{g})\|_\infty=\|\tilde{f}-\tilde{g}\|_\infty
\leq r^2 \|f-g\|_\infty.
\end{equation*}
Hence $T$ is a contraction and it has a unique fixed point $\f$
which satisfies (\ref{ss2}).
\end{proof}

Let $\mathcal{S}=\{S_1,..,S_N\}$  be an iterated function system (IFS) of similarities of the
form
\begin{equation}
\label{heissim}S_i=\t_{q_i}\circ \delta_{r_i}
\end{equation}
for $q_i \in \hn,i=1,..,N$ and $0<r_1 \leq...\leq r_N <1$. The IFS $\mathcal{S}$ is said to satisfy the open set condition if there exists a bounded non-empty open set $O\subset \hn$ such that $\cup_{i=1}^{N} S_i (O)\subset O$ and $S_i (O) \cap S_j (O)=\emptyset$ for $i \neq j$. It
follows by \cite{br} that if $K \subset \hn$ is the invariant set
with respect to $\mathcal{S}$, and $\mathcal{S}$ satisfies the open
set condition then,
$$0<\mathcal{H}^a(K)< \infty$$ where $a$ is given by $$\sum_{i=1}^N r^a_i=1.$$

Recalling Definitions \ref{vertra} and \ref{hortra} our next result reads as follows.
\begin{thm}
\label{rig} Let $\mathcal{S}$ be an IFS of contractive similarities
as in (\ref{heissim}) that satisfies the open set condition, and
$$0<\mathcal{H}^m(K)<\infty \ \text{where} \ m \in \N \cap [1,2n+1].$$
If $K$ is not contained in any $F
\in \tr \V_{m-1} \cup \tr \mathcal{W}_m$ then for all $k \in K$, every $\nu \in \tanm (\H^m
\lfloor K,k)$ and every $G \in \V_{m-1} \cup \mathcal{W}_m$ satisfy
$$\spt \nu \setminus G \neq \emptyset. $$ Therefore $\tanm (\H^m \lfloor K,k) \cap \mathcal{H} (n,m)=\emptyset$.
\end{thm}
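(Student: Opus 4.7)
The plan is to use the self-similar structure of $K$ to show that every $\nu\in\tanm(\mu,k)$, where $\mu:=\H^m\lfloor K$, has support containing a dilated translate of $K$ itself. Once that is established, the hypothesis that $K$ lies in no element of $\tr\V_{m-1}\cup\tr\W_m$ immediately rules out $\spt\nu\subset G$ for any $G\in\V_{m-1}\cup\W_m$, since the homogeneity of $G$ will force $K\subset a\cdot G\in\tr\V_{m-1}\cup\tr\W_m$.

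\textbf{Step 1 (normalization).} Fix $k\in K$ and $\nu\in\tanm(\mu,k)$. The open set condition together with $0<\H^m(K)<\infty$ forces $\mu$ to be $m$-AD regular with constants uniform on $K$. Hence in the representation $\nu=\lim_{j\to\infty}c_j\,T_{k,r_j\sharp}\mu$ the scalars $c_j r_j^m$ lie in a compact subset of $(0,\infty)$, so after rescaling $\nu$ (which does not affect $\spt\nu$) and passing to a subsequence we may assume
\[
\nu=\lim_{j\to\infty}r_j^{-m}\,T_{k,r_j\sharp}\mu=\lim_{j\to\infty}\H^m\lfloor T_{k,r_j}(K)
\]
vaguely; the second equality uses the identity $r_j^{-m}T_{k,r_j\sharp}\mu=\H^m\lfloor T_{k,r_j}(K)$, an immediate consequence of translation-invariance of $\H^m$ together with $\H^m(\delta_r(A))=r^m\H^m(A)$.

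\textbf{Step 2 (locating a copy of $K$).} Fix an address $(i_n)_{n\in\N}$ of $k$, so that $k\in S_{i_1}\circ\cdots\circ S_{i_N}(K)$ for every $N$. For each $j$, let $N_j$ be the smallest $N$ with $\rho_j:=r_{i_1}\cdots r_{i_N}\leq r_j$; then $\rho_j/r_j\in[r_*,1]$, where $r_*=\min_i r_i$. Writing $S_{i_1}\circ\cdots\circ S_{i_{N_j}}=\tau_{p_j}\circ\delta_{\rho_j}$, the point $a_j:=\delta_{1/\rho_j}(p_j^{-1}\cdot k)$ lies in $K$, and the identity $k^{-1}\cdot p_j=\delta_{\rho_j}(a_j^{-1})$ gives
\[
T_{k,r_j}\bigl(S_{i_1}\circ\cdots\circ S_{i_{N_j}}(K)\bigr)=\delta_{\rho_j/r_j}(a_j^{-1}\cdot K)\subset T_{k,r_j}(K).
\]
Passing to a subsequence so that $a_j\to a\in K$ and $\rho_j/r_j\to t\in[r_*,1]$, these sets converge in Hausdorff distance to $\delta_t(a^{-1}\cdot K)$, and the dominated measures $\H^m\lfloor\delta_{\rho_j/r_j}(a_j^{-1}\cdot K)$ --- pushforwards of $\mu$ under similarities converging uniformly on the compact set $K$ --- converge vaguely to $\H^m\lfloor\delta_t(a^{-1}\cdot K)$. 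Passing to the vague limit in the domination inequality yields $\H^m\lfloor\delta_t(a^{-1}\cdot K)\leq\nu$ as measures, hence
\[
\spt\nu\supset\delta_t(a^{-1}\cdot K),
\]
since $\H^m\lfloor\delta_t(a^{-1}\cdot K)$ has full support on $\delta_t(a^{-1}\cdot K)$ by the AD-regularity of $\mu$.

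\textbf{Step 3 (conclusion).} Suppose for contradiction that $\spt\nu\subset G$ for some $G\in\V_{m-1}\cup\W_m$. Then $\delta_t(a^{-1}\cdot K)\subset G$, and since $G$ is a homogeneous subgroup, $\delta_{1/t}(G)=G$, so $a^{-1}\cdot K\subset G$ and $K\subset a\cdot G\in\tr\V_{m-1}\cup\tr\W_m$, contradicting the hypothesis. For the final assertion: any $\nu\in\H(n,m)$ satisfies $\spt\nu=V$ for some homogeneous subgroup $V$ of Hausdorff dimension $m$, hence $V\in\V_{m-1}\cup\W_m$; taking $G=V$ then contradicts the first part. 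The main obstacle I foresee is justifying the uniform normalization $c_j\sim r_j^{-m}$ for every $k\in K$ (not merely $\mu$-a.e., as would be furnished by Lemma \ref{tandens}) and the careful passage from vague convergence of dominated measures to inclusion of supports; both steps hinge on the uniform AD-regularity of $\mu$ guaranteed by the open set condition.
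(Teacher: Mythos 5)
Your proof is correct, but it takes a genuinely different route from the paper's. The paper follows the strategy of \cite{M1}: since $K\not\subset F$ for every $F\in \tr\V_{m-1}\cup\tr\W_m$, one extracts $m+2$ points of $K$ and a $\r>0$ such that for each such $F$ at least one of the points lies at distance $>\r$ from $F$; self-similarity transports these points into $K_\a\subset B(k,r)$ for a minimal word $\a$ with $r_\a\sim r$, and since $S_\a^{-1}(k\cdot G)$ stays in $\tr\V_{m-1}\cup\tr\W_m$ this yields the uniform cone estimate (\ref{tanconest}), $\H^m(K\cap B(k,r)\setminus X(k,G,\delta))\geq Cr^m$, which passes to tangent measures and gives $\nu(B(0,1)\setminus X(0,G,\delta))>0$. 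You instead prove the stronger structural fact that every $\nu\in\tanm(\H^m\lfloor K,k)$ dominates $\H^m\lfloor\delta_t(a^{-1}\cdot K)$ for some $a\in K$ and $t$ bounded away from $0$, so $\spt\nu$ contains a similar copy of $K$; the homogeneity of $G$ then reduces $\spt\nu\subset G$ to $K\subset a\cdot G\in\tr\V_{m-1}\cup\tr\W_m$, contradicting the hypothesis outright, and the final assertion follows since every measure in $\mathcal{H}(n,m)$ is supported on some $G\in\V_{m-1}\cup\W_m$. Your computations are sound: the composition $S_{i_1}\circ\cdots\circ S_{i_N}=\tau_{p}\circ\delta_{\rho}$, the identity $T_{k,r_j}(p_j\cdot\delta_{\rho_j}(K))=\delta_{\rho_j/r_j}(a_j^{-1}\cdot K)$, and the passage of the domination $\sigma_j\leq\tau_j$ to the weak limit are all correct, and the normalization issue you flag is indeed resolved by the uniform AD-regularity of $\H^m\lfloor K$ under the open set condition (which both proofs need, since Lemma \ref{tandens} only gives the $r^{-m}$ normalization $\mu$-a.e., while the theorem is asserted for all $k\in K$). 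What each approach buys: the paper's argument produces the quantitative lower density bound outside cones, which is robust and of independent interest; yours gives the sharper qualitative conclusion that blow-ups of a self-similar set contain similar copies of the whole set, making the non-flatness of tangent measures immediate, at the cost of leaning more heavily on the exact self-similar structure and compactness of $K$.
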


\begin{proof} The proof follows the reasoning developed in \cite{M1},
where a rigidity result for Euclidean self-similar sets is proven.
Assume without loss of generality that $\diam(K)=1$, let $k \in K$
and $G \in \V_{m-1} \cup \W_m$. As $K$ is not contained in any $F \in \tr
\V_{m-1} \cup \tr \W_m$ there exist $E \subset K$,
$$E=\{a_1,..,a_{m+2}\},$$
and $0<\r <1$ such that for all $F \in \tr \V_{m-1} \cup \tr \W_{m}$ there exist
$a_i=a_i (F),i=1,..,m+2$, such that,
\begin{equation}
\label{distpla} \dist (a_i,F)> \r.
\end{equation}

Furthermore there exist $0<r_0<\frac{\r}{2}$ and $\eta >0$, such
that
\begin{equation}
\label{lodens} \H^d (K \cap B(a_i,r_0)) \geq \eta  \text{ for }
i=1,\dots,m+2.
\end{equation}
Recall that the contraction ratios satisfy,
$$0<r_1 \leq...\leq r_N <1$$ and let $0<r<r_1$. For any word $\a=(a_1,\dots,a_m),a_i \in \{1,\dots,N\},m\in \N,$ denote $r_\a=r_{a_1}\dots r_{a_m}$ and $A_\a=S_\a (A)$ for any set $A \subset \hn$. Let $\a$ be a
minimal word such that
$$k \in K_\a\subset B(k,\frac{r}{2}).$$
Then it follows that
$$r_\a \leq \frac{r}{2}\leq \frac{r_\a}{r_1}.$$

Notice also that for any map of the form,
$$S=\t_q \circ \delta _r \text{ for } q\in \hn,r>0,$$
$$S(F), S^{-1}(F) \in \tr \V_m \text{ if }F \in \tr \V_m, m \in [1,2n]$$
and
$$S(F), S^{-1}(F) \in \tr \W_m \text{ if }F \in \tr \W_m, m \in [0,2n].$$
Therefore $S^{-1}_\a(k \cdot G) \in \tr \V_{m-1}\cup \tr \W_{m}$ and by
(\ref{distpla}) there exist $a_i \in E,i=1,..,m+2,$ such that
$$\dist (a_i,S^{-1}_\a(k \cdot G))> \r.$$
Therefore,
\begin{equation}
\label{conest} B(S_\a a_i, r_\a r_0)\subset B(k,r) \setminus \{x \in
\hn : \dist(x, k \cdot G)\leq \frac{\r r_\a}{2}\}.
\end{equation}
This follows because for $x \in B(S_\a a_i, r_\a r_0),$
$$d(x,k) \leq d(x, S_\a a_i)+d(S_\a a_i,k)\leq r_\a r_0+r_\a \leq r,$$
and
\begin{equation*}
\begin{split}
\dist(x, k \cdot G) &\geq \dist(S_\a a_i, k \cdot G)-d(x, S_\a a_i)
\\
&=r_a \dist(a_i,S^{-1}_\a  (k \cdot G))-d(x, S_\a a_i)>\frac{\r
r_\a}{2}.
\end{split}
\end{equation*}
Hence for $\delta =\dfrac{\r r_1}{4},$ by (\ref{conest}) and
(\ref{lodens}),
\begin{equation*}
\begin{split}
\H^m (K \cap B(k,r) \setminus X(k,G,\delta))&\geq \H^m (K \cap
B(k,r)
\setminus \{x \in \hn:\dist (x, k\cdot G) \leq \delta r \}) \\
& \geq \H^m (K_\a \cap B(S_\a a_i,r_\a r_0)) \\
& = r^m_\a \H^m (K \cap B(a_i,r_0)) \geq \eta r^m_\a  \\
& \geq \eta (\frac{r_1}{2})^m r^m.
\end{split}
\end{equation*}

Therefore there exist $C>0$ and $\delta \in (0,1)$ such that if
$0<r<r_1$
\begin{equation}
\label{tanconest} \frac{\H^m (K \cap B(k,r) \setminus
X(k,G,\delta))}{r^m}>C.
\end{equation}

Finally let $\nu \in \tanm (\H^m \lfloor K, k)$ and recalling Lemma \ref{tandens},
$$\nu = c\lim_{i \ra \infty}\frac{1}{r_i^m} T_{k,r_i,\sharp }\H^m \lfloor K,$$
for some positive numbers $c,(r_i)$ with $r_i \ra 0$. Then
\begin{equation*}
\begin{split}
\nu (B(0,1) \setminus X(0,V,\delta)) &\geq c\limsup_{i \ra
\infty}\frac{1}{r_i^m} \H^m  (K \cap T_{k,r_i}^{-1}(B(0,1)\setminus
X(0,G,\delta))) \\
&=c\limsup_{i \ra \infty}\frac{1}{r_i^m} \H^m  (K \cap
B(k,r_i)\setminus X(k,G,\delta))) \\
&\geq cC.
\end{split}
\end{equation*}
Therefore $\spt \nu \not\subset G$ and the proof is complete.
\end{proof}

As an immediate corollary of Theorems \ref{mthm} and \ref{rig} we
obtain.
\begin{cor}
Let $\mathcal{S}$ be an IFS of contractive similarities as in
(\ref{heissim}) that satisfies the open set condition, and
$$0<\mathcal{H}^m(K)<\infty \text{ where }m \in \N \cap [1,2n+1].$$
If $K$ is not contained in any translated $(m-1)$-vertical or $m$-horizontal group, i.e. $K \not\subset G$ for all $G \in \tr \V_{m-1} \cup \tr\W_m$, then the $m$-Riesz transforms $\mathcal{R}^{*}_m$
are not bounded in $L^2(\H^m \lfloor K)$.

In particular if $\mu$ is the natural measure on the $m$-dimensional
Cantor-like sets of Theorem \ref{selfsimilar}, then the $m$-Riesz
transforms $\mathcal{R}^{*}_m$ are not bounded in $L^2(\mu)$.
\end{cor}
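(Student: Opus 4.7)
The plan is to derive the main non-boundedness claim as an immediate contradiction between Theorems \ref{mthm} and \ref{rig}. First I would set $\mu := \H^m \lfloor K$; by the open set condition together with the standard AD-regularity of invariant sets of similarity IFSs satisfying OSC (cf.\ \cite{br}), $\mu$ is $m$-AD-regular, so the density hypothesis (\ref{dens}) of Theorem \ref{mthm} holds with $s = m$. Assuming for contradiction that $\mathcal{R}^*_m$ is bounded on $L^2(\mu)$, the truncation bound (\ref{maxbound}) follows $\mu$-a.e.\ by the remark just after Theorem \ref{mthm}. Theorem \ref{mthm}(ii) then produces a $\mu$-positive set of points $a \in K$ with $\tanm(\mu, a) \cap \mathcal{H}(n, m) \neq \emptyset$. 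On the other hand, the hypothesis $K \not\subset F$ for every $F \in \tr \V_{m-1} \cup \tr \W_m$ is exactly what Theorem \ref{rig} requires, and that theorem forces $\tanm(\mu, k) \cap \mathcal{H}(n, m) = \emptyset$ for every $k \in K$. These two conclusions contradict each other, so $\mathcal{R}^*_m$ cannot be $L^2(\mu)$-bounded.

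For the ``in particular'' part, I would specialize to the Cantor-like sets $K_r$ of Theorem \ref{selfsimilar} by choosing $r = 2^{-(2n+2)/m}$, which makes the similarity dimension equal to the prescribed integer $m \in [1, 2n+1]$. Theorem \ref{selfsimilar} gives $0 < \H^m(K_r) < \infty$, and disjointness of the $S_j(R)$ supplies the OSC. What remains is the verification that $K_r$ is not contained in any element of $\tr \V_{m-1} \cup \tr \W_m$. For the vertical case, a translate $a \cdot V_L$ with $L \in G(2n, m-2)$ has horizontal projection equal to an affine subspace of $\R^{2n}$ of dimension $m-2 \leq 2n-1$; but the horizontal projection of $K_r$ is the invariant set of the Euclidean IFS $z \mapsto z_j + r z$, $j = 1, \dots, 2^{2n}$, and therefore contains the $2^{2n}$ fixed points $z_j/(1-r)$, which are affine images of the vertices of $[0, 1-r]^{2n}$ and hence span $\R^{2n}$ affinely. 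For the horizontal case, a translate $a \cdot G$ with $G \in \W_m$ is the graph over the affine plane $a' + G$ of the affine function $p' \mapsto a_{2n+1} + A(a, p' - a')$; a direct fixed-point computation shows that, for each fixed $j$, the four similarities with horizontal translation $z_j$ and last-coordinate translations $0, \tfrac{1}{4}, \tfrac{1}{2}, \tfrac{3}{4}$ have fixed points all sharing the horizontal coordinate $z_j/(1-r)$ but with four distinct values of $p_{2n+1}$ (namely $t/(1-r^2)$ for those four $t$). Since all four fixed points lie in $K_r$, the coordinate $p_{2n+1}$ cannot be a function of $p'$ on $K_r$, and so $K_r \not\subset a \cdot G$.

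The main technical content has already been carried by Theorems \ref{mthm} and \ref{rig}, which is why this is phrased as an immediate corollary; at the level of the corollary itself, the only genuinely new work is the non-containment check, and the hard part there is the horizontal case, where the Heisenberg group law makes the affine structure of $a \cdot G$ nontrivial and one must actually exhibit four Cantor points that obstruct the graph condition, as sketched above.
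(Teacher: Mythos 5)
Your proposal is correct and follows exactly the route the paper intends: the paper offers no written proof, presenting the statement as an immediate consequence of Theorems \ref{mthm} and \ref{rig} via the contradiction you describe. Your verification that the Cantor sets $K_r$ (with $r=2^{-(2n+2)/m}$) are not contained in any element of $\tr\V_{m-1}\cup\tr\W_m$ --- the spanning of the horizontal fixed points $z_j/(1-r)$ for the vertical case, and the four fixed points $(z_j/(1-r),\,t/(1-r^2))$ with a common horizontal coordinate for the horizontal case --- is a correct and welcome filling-in of a detail the paper leaves implicit.
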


\vspace{1cm}
\begin{footnotesize}
{\sc Department of Mathematics and Statistics,
P.O. Box 68,  FI-00014 University of Helsinki, Finland,}\\
\emph{E-mail addresses:} \verb"vasileios.chousionis@helsinki.fi",
\verb"pertti.mattila@helsinki.fi"

\end{footnotesize}


\begin{thebibliography}{CMM}

\bibitem[AKL]{akl} L. Ambrosio, B. Kleiner and E. Le Donne, \emph{Rectifiability of sets of finite perimeter in Carnot 
groups: existence of a tangent hyperplane}, J. Geom. Anal. 19 (2009), no. 3, 
509-540.

\bibitem[BR]{br} Z. Balogh and H. Rohner, \emph{ Self-similar sets in doubling spaces}, Illinois
J. Math. 51 (2007), no. 4, 1275-1297.


\bibitem[BLU]{blu} A. Bonfiglioli, E. Lanconelli and F. Uguzzoni, \emph{Stratified Lie Groups and Potential Theory for Their Sub-Laplacians}, Springer Monographs in Mathematics 2007.

\bibitem[CDPT]{cap} L. Capogna, D. Danielli, S. D. Pauls, J. T. Tyson, \emph{An Introduction
to the Heisenberg Group and the Sub-Riemannian Isoperimetric Problem}, Birkh\"auser 2007.

\bibitem[DS]{DS} G. David and S. Semmes, \emph{Analysis of and on Uniformly
Rectifiable Sets}, Mathematical Surveys and Monographs, 38. American
Mathematical Society, Providence, RI, (1993).

\bibitem[DH]{dh} D. Deng and Y. Han, \emph{Harmonic Analysis on Spaces of Homogeneous Type}, Lecture Notes in Mathematics, Vol.
1966. Springer-Verlag, Berlin, (2009).

\bibitem[F]{Fe} H. Federer, \emph {Geometric Measure Theory}, Springer-Verlag New York Inc., New York 1969.

\bibitem[M1]{M1} P. Mattila, \emph{On the structure of self-similar fractals},
Ann. Acad. Sci. Fenn. Ser. A I Math. 7 (1982), 189--195.

\bibitem[M2]{M} P. Mattila, \emph{Geometry of Sets and Measures in Euclidean Spaces},
Cambridge University Press, (1995).

\bibitem[M3]{M4} P. Mattila, \emph{Singular integrals and rectifiability}, Proceedings of the 6th International Conference on Harmonic Analysis and Partial Differential Equations (El Escorial, 2000).  Publ. Mat.  2002,  Vol. Extra, 199--208.

\bibitem[M4]{ms} P. Mattila, \emph{Measures with unique tangent measures in metric groups}, Math.
Scand. 97 (2005), 298--398.

\bibitem[MMV]{MMV} P. Mattila, M. S. Melnikov and J. Verdera,
\emph{The Cauchy integral, analytic capacity, and uniform rectifiability},
Ann. of Math. (2)  144  (1996),  no. 1, 127--136.


\bibitem[MPa]{MPa} P. Mattila and P. V. Paramonov,
\emph{On geometric properties of harmonic Lip1-capacity}, Pacific J. Math., 171:2 (1995), 469--490 . 

\bibitem[Pa]{Pa} H. Pajot, \emph{Analytic capacity, rectifiability, Menger curvature and the Cauchy integral}, Lecture Notes in Mathematics, Vol. 1799. Springer-Verlag, Berlin.

\bibitem[P]{P} D. Preiss, \emph{Geometry of Measures in $\mathbb{R}^{n}$: distribution, rectifiability and 
densities}, Ann. of Math. 125 (1987), 537--643.

\bibitem[S]{s} A. Schief, \emph{ Self-similar sets in complete metric spaces}, 
Proc. Amer. Math. Soc. 124 (1996), 481-490.

\bibitem[St]{St} R. Strichartz, \emph{Self-similarity on nilpotent Lie groups}, Contemp. Math. 140, (1992) 123--157. 

\bibitem[T]{To} X. Tolsa, \emph{Analytic capacity, rectifiability, and the Cauchy integral}, Contemp. International Congress of Mathematicians. Vol. II,  1505--1527, Eur. Math. Soc., Z\"urich, 2006.

\bibitem[Ve]{Ve} J. Verdera, \emph{$L^2$ boundedness of the Cauchy Integral and Menger curvature}, Contemp. Math. 277 (2001), 139-158.

\bibitem[Vi]{V} M. Vihtil\"a, \emph{The boundedness of Riesz $s$-transforms of measures in $\Rn$}, 
Proc. Amer. Math. Soc. 124 (1996), 481-490.


\end{thebibliography}
\end{document}